\newtheorem{theorem}{Theorem}[section]
\newtheorem{corollary}[theorem]{Corollary}
\newtheorem{lemma}[theorem]{Lemma}
\newtheorem{proposition}[theorem]{Proposition}
\newtheorem{remark}[theorem]{Remark}
\newtheorem{example}[theorem]{Example}
\newcommand{\vanish}[1]{}\parskip=12pt
\def\p{\prime}
\def\R{\mathbb{R}}
\def\A{\mathcal{A}}
\def\C{\mathcal{C}}
\def\D{\mathcal{D}}
\def\N{\mathcal{N}}
\def\T{\mathcal{T}}
\def\U{\mathcal{U}}
\def\V{\mathcal{V}}
\def\F{\mathcal{F}}
\def\tl{\triangleleft}
\def\tr{\triangleright}
\numberwithin{equation}{section}
\begin{document}

\title[The HOMFLY Polynomial of Closed Braids]
{The HOMFLY Polynomial of Links in Closed Braid Form}
\author{Pengyu LIU, Yuanan DIAO and G\'abor Hetyei}
\address{Department of Mathematics and Statistics, UNC Charlotte,
    Charlotte, NC 28223}
\email{}
\subjclass[2010]{Primary: 57M25; Secondary: 57M27}
\keywords{knots, links,   braids, HOMFLY polynomial, braid index.}

\begin{abstract}
It is well known that any link can be represented by the closure of a braid. The minimum 
number of strings needed in a braid whose closure represents a given link is called the 
braid index of the link and the well known Morton-Frank-Williams inequality reveals a close relationship between the HOMFLY polynomial of a link and its braid index. In the case that a link is already presented in a closed braid form, Jaeger derived a special formulation of the HOMFLY polynomial. In this paper, we prove a   variant of Jaeger's result as well as a dual version of it. Unlike Jaeger's original reasoning, which relies on representation theory, our proof uses only
elementary geometric and combinatorial observations. Using our variant
and its dual version, we provide a direct and elementary proof of the
fact that the braid index of a link that has an $n$-string closed braid
diagram that is also reduced and alternating, is exactly $n$. Until know
this fact was only known as a consequence of a result due to Murasugi on
fibered links that are star products of elementary torus links and of
the fact that alternating braids are fibered. 
\end{abstract}

\maketitle
\section{Introduction}

\medskip
It is well known that any link can be represented by the closure of a braid. The minimum 
number of strands needed in a braid whose closure represents a given link is called the 
{\em braid index} of the link. Defined as the extreme value of a quantity over an infinite family of links that are topologically equivalent like other link invariants (such as the minimum crossing number),  the braid index of a link is hard to
compute~\cite{A} in general. In the case of the minimum crossing number, there was a long standing conjecture which states that for a reduced alternating link diagram, the number of crossings in the diagram is equal to the minimum crossing number of the link. It is known that the span of the Jones polynomial \cite{Jo} of a link gives a lower bound on the crossing number of the link. In the case of a reduced alternating link diagram, it was shown that the span of the Jones polynomial of a link equals the number of crossings in the diagram, which leads to the proof of the conjecture \cite{K,M1,T}. In the case of braid index, there is a similar inequality relating the braid index of a link to the $a$-span of its HOMFLY polynomial (which is a polynomial of two variables $z$ and $a$ to be defined in the next section). S.\ Yamada proved that any link diagram of a given link $L$ with $k$ {\em Seifert circles} can be realized as the closure of a braid on $k$ strands, which implies that the braid index of an oriented link $L$ equals the minimum number of Seifert circles of all link
diagrams of $L$~\cite{Ya}. In ~\cite{Mo}, H.\ Morton showed that the
number of Seifert circles of a link $L$, hence the braid index of $L$
(in light of Yamada's result),  is bounded from below by $a$-span$/2+1$
(which is called the Morton-Frank-Williams inequality, or MFW inequality
for short). In analogy to the crossing number conjecture for a reduced
alternating link diagram, K.~Murasugi conjectured that the number of
Seifert circles, hence the braid index, in such a diagram equals $a$-span$/2+1$ (the Murasugi Conjecture) \cite{Mu}. Although this conjecture turned out to be false in general~\cite{MP} (for example the knot $5_2$ has 4 Seifert circles, but the $a$-span of its HOMFLY polynomial is 4 so $a$-span$/2+1=3$ and the braid index of $5_2$ is also 3), researchers had shown that the MFW inequality is sharp for many classes of links (including some non-alternating ones) hence the $a$-span of the HOMFLY polynomial for these links can be used to determine their braid index. Examples include the closed positive braids with a full twist (in particular the torus links)~\cite{FW}, the 2-bridge links and fibered alternating links~\cite{Mu}, and a new class of links discussed in a more recent paper by S.~Y.~Lee and M.~Seo~\cite{Lee}. For more readings on related topics, interested readers can refer to J.S.~Birman and W.W.~Menasco~\cite{Bir}, P.R.~Cromwell~\cite{Crom}, E.A.~Elrifai~\cite{El},  H.~Morton, H.~B.~Short~\cite{MS}, T.~Nakamura~\cite{Na1} and A.~Stoimenow~\cite{Sto}.

Motivated by this question, in this paper the authors seek a special and explicit formulation of the HOMFLY polynomial for certain classes of links where the explicit forms of the HOMFLY polynomial would allow us to analyze and derive the $a$-spans of the HOMFLY polynomials of these links. Our main result expresses the HOMFLY polynomial of a link presented in a closed braid diagram form in two explicit formulas. We show that one of our formulas of the HOMFLY polynomial is equivalent to the expansion derived by F.\ Jaeger ~\cite{Ja}. However, our approach is combinatorial in nature and the proof of the formula is shorter than the proof in ~\cite{Ja}. We show that the Morton-Frank-Williams inequality is an immediate consequence of these two HOMFLY polynomial formulas. As a more significant application of our result, we use it to show that if a link has an $n$-string closed braid diagram that is also reduced and alternating, then the braid index of the link is exactly $n$. The proof of this is direct and short. Our approach is very different from the proof of Murasugi's more general results ~\cite{Mu} on oriented alternating fibered links, which is inductive in nature. 

This paper is structured as follows. In Section~\ref{s2}, we introduce
basic definitions and terminology about link diagrams, braids and the HOMFLY
polynomial. We introduce two special classes of resolving trees for
closed braids, in which every vertex is a closed braid and the leaf vertices are closed braids that
represent trivial links. We call these resolving
trees descending trees and ascending trees respectively. In
Section~\ref{s3}, we state and prove our main result, namely two formulas expressing the HOMFLY
polynomial of a closed braid as a total 
contribution of all leaf vertices in our trees and show that the Morton-Frank-Williams inequality~\cite{FW,Mo} is a direct consequence of these two formulas. In Section~\ref{s4}, we show that the $a$-span of the HOMFLY
polynomial of a reduced alternating braid on $n$ strands is exactly
$2n-2$ and an application of the the Morton-Frank-Williams inequality shows that the braid index of
a reduced alternating braid equals the number of strands in the braid. As another application of our main result from Section~\ref{s3}, we also give a short proof that the leading coefficient of the Alexander polynomial of such a closed braid equals $\pm 1$.
Finally, in Section~\ref{acp}, 
we show that one of our formulas is equivalent to the expansion of the HOMFLY polynomial derived by 
F.\ Jaeger~\cite{Ja} based on the concept of admissible circuit partitions. 

\section{Basic concepts}\label{s2}

\subsection{Link diagrams and Reidemeister moves}

We assume that the reader has the basic knowledge about the definition of a link and refer a reader without such knowledge to a textbook such as \cite{A, B, Li}. For the convenience of the reader, however, we will review one important result that is needed in Section \ref{s3}. 

Figure \ref{R-moves} defines three moves one can make on a link diagram without changing its topology, and these are called Reidemeister moves of type I, II and III. In 1926, K.~Reidemeister~\cite{Re} proved that two link diagrams represent the same link if and only if one diagram can be changed to the other through a finite sequence of Reidemeister moves.

\begin{figure}[htb!]
\includegraphics[scale=0.6]{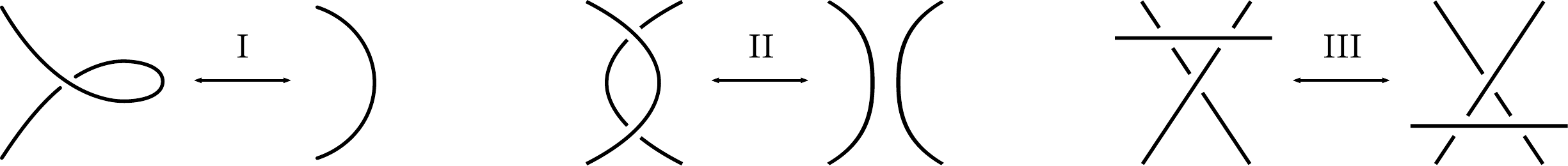}
\caption{Reidemeister moves of type I, II and III.}
\label{R-moves}
\end{figure}

For a given oriented link diagram $L$, we assign each crossing $+1$ or $-1$ according to its sign as defined in Figure~\ref{fig:cross}. The writhe of $L$, written as $w(L)$, is the sum of these $+1$'s and $-1$'s over all crossings of $L$. However, if we only sum the $+1$'s and $-1$'s over crossings between two different components $\C_1$ and $\C_2$ of the link, then this number is always even and half of it is called the {\em linking number} between $\C_1$ and $\C_2$. Using the Reidemeister moves, it is easy to show that the linking number is a link invariant. In particular, if $\C_1$ and $\C_2$ are separated by a topological plane (in which case $\C_1$ and $\C_2$ are said to be {\em splittable}), then the linking number between $\C_1$ and $\C_2$ is zero. We make the following remark for future reference. Its proof is trivial and is left to the reader.

\begin{remark}\label{R_remark}
{\em Reidemeister moves of type II and III do not change the writhe of a link diagram hence link diagrams related by a finite sequence of Reidemeister moves II and III have the same writhe. In particular, if a link diagram $L_2$ is obtained through $L_1$ by a finite sequence of moves that involve only deformation of segments of the link within planes that are parallel to the projection plane of the link diagram, then $w(L_2)=w(L_1)$ since such moves do not introduce Reidemeister moves of type I. Also, if $L$ is a link with splittable components $\C_1$, $\C_2$, \ldots, $\C_{\tau}$, then $w(L)=\sum_{1\le j\le \tau}w(\C_j)$.}
\end{remark}

A crossing in a link diagram is called {\em nugatory} if there is a simple closed curve such that the link diagram intersects the curve only at the crossing and the link diagram intersects both components of the complement of the curve. 
A link diagram is said to be {\em reduced} if no crossing in the diagram is nugatory. A link diagram is {\em alternating} if as we travel through the link diagram by any given orientation, the strands go through the crossings alternately between overpasses and underpasses. For example, the closure of the braid in Figure \ref{fig:bd} is an alternating link diagram. As we mentioned in the introduction, reduced alternating link diagrams are special since the number of crossings in a reduced alternating link diagram is the minimum crossing number of the link  \cite{K,M1,T}.

\subsection{Braids}

Consider $\R^2$ as the standard Euclidean $xy$-plane.  A {\em braid diagram} (or just a {\em braid}) on $n$ strands is a set  $\D\subset \R\times [0,1]$ consisting of $n$ curves called $strands$ of $\D$ such that the following four conditions are met. First, each strand is a monotonic curve in the $y$ direction. Second, every point of $\{1,2,\ldots,n\}\times\{0\}$ is a starting point of a unique strand and every point of $\{1,2,\ldots,n\}\times\{1\}$ is an end point of a unique strand. Third, every point of $\R\times I$ belongs to at most two strands. A point that belongs to two strands is called a {\em crossing}. At each crossing, one strand is identified as an {\em overpass} and the other is as an {\em underpass}~\cite{Ka}. Fourth, there is at most one crossing in $\R\times \{t\}$ for each $t\in [0,1]$. Note that the second condition gives the braid diagram a downward orientation, so the closure of a braid diagram is an oriented link diagram. 

Treated as topological objects, one can speak of topological equivalence
of braid diagrams, and such equivalence relations provide the
foundations for one to treat the braids as elements in the algebraic
objects called the {\em braid groups}. Not to deviate from our main
task, we will only point out that a braid group $B_n$ is a group with
$n-1$ generators $\sigma_1, \ldots, \sigma_{n-1}$ satisfying certain
relations. An element of $B_n$ is a 
word of these generators and each letter in the word corresponds to a
crossing in the braid. An example of a braid on 5 strands
and its counterpart in $B_5$ is given in Figure \ref{fig:braid}. Please
refer to ~\cite{Ka} for more details on braid groups. In this paper we
are only interested in closed braids as topological objects, not the
braids in the algebraic sense. For this reason, {\em we will not distinguish a braid and its closure}, that is, the word braid (or a symbol of it) can either represent the braid itself or its closure. The reader should rely on the context to determine its meaning (in many cases it really does not matter). 

We define the {\em label} of a strand by the $x$ coordinate of its starting point and we denote the corresponding mapping by $\ell$, that is, if a strand $s$ starts at $(m,1)$, then $\ell(s)=m$. On the
other hand, the mapping $p$ that takes the label of a strand to the $x$ coordinate of its ending point defines a permutation of the labels (namely the integers from $1$ to $n$). Denote this permutation by $p(\D)$ and write it as a product of disjoint cycles, we have $p(\D)=(s_{11}s_{12} \ldots s_{1k_1})(s_{21}s_{22}\ldots s_{2k_2})\ldots (s_{\tau 1}s_{\tau 2}\ldots s_{\tau k_\tau })$ where $s_{i1}$ is the label of the first strand in the $i$-th cycle, $s_{i2}$ is the label of the second strand in the $i$-th cycle, ..., and so on, $\tau $ is the number of cycles in the permutation. Furthermore, we can re-arrange the orders of the cycles and the numbers in each cycle so that $s_{i1}$ is the smallest integer in each cycle for each $i$, and $s_{i1}<s_{j1}$ if $i<j$. We call this special form of $p(\D)$ the {\em  standard form}. From now on, $p(\D)$ will always be expressed in its standard form. Note that the standard form of $p(\D)$ defines a total order among the strand labels, namely
\begin{displaymath}
s_{11}\tl s_{12}\tl \ldots \tl s_{1k_1}\tl s_{21}\tl s_{22}\tl \ldots \tl s_{2k_2}\tl \ldots \tl s_{\tau 1}\tl s_{\tau 2}\tl \ldots\tl s_{\tau k_\tau}
\end{displaymath}
We call this order the {\em return order} of the strands in the braid diagram $\D$. 

We call each $s_{i1}$ in $p(\D)$ the {\em pivot label} within its corresponding cycle and $(s_{i1},1)$ the {\em pivot point} of the cycle when $p(\D)$ is expressed in its standard form. Note that each cycle in $p(\D)$ corresponds to a connected component in the closed braid diagram and we can travel through $\D$ by traveling through each such component. We say that we travel through $\D$ {\em naturally} if we travel along the strands of $\D$ in its return order, starting from the pivot point at each component and follow the orientation of the braid $\D$.

\begin{figure}[htb!]
\includegraphics[scale=.4]{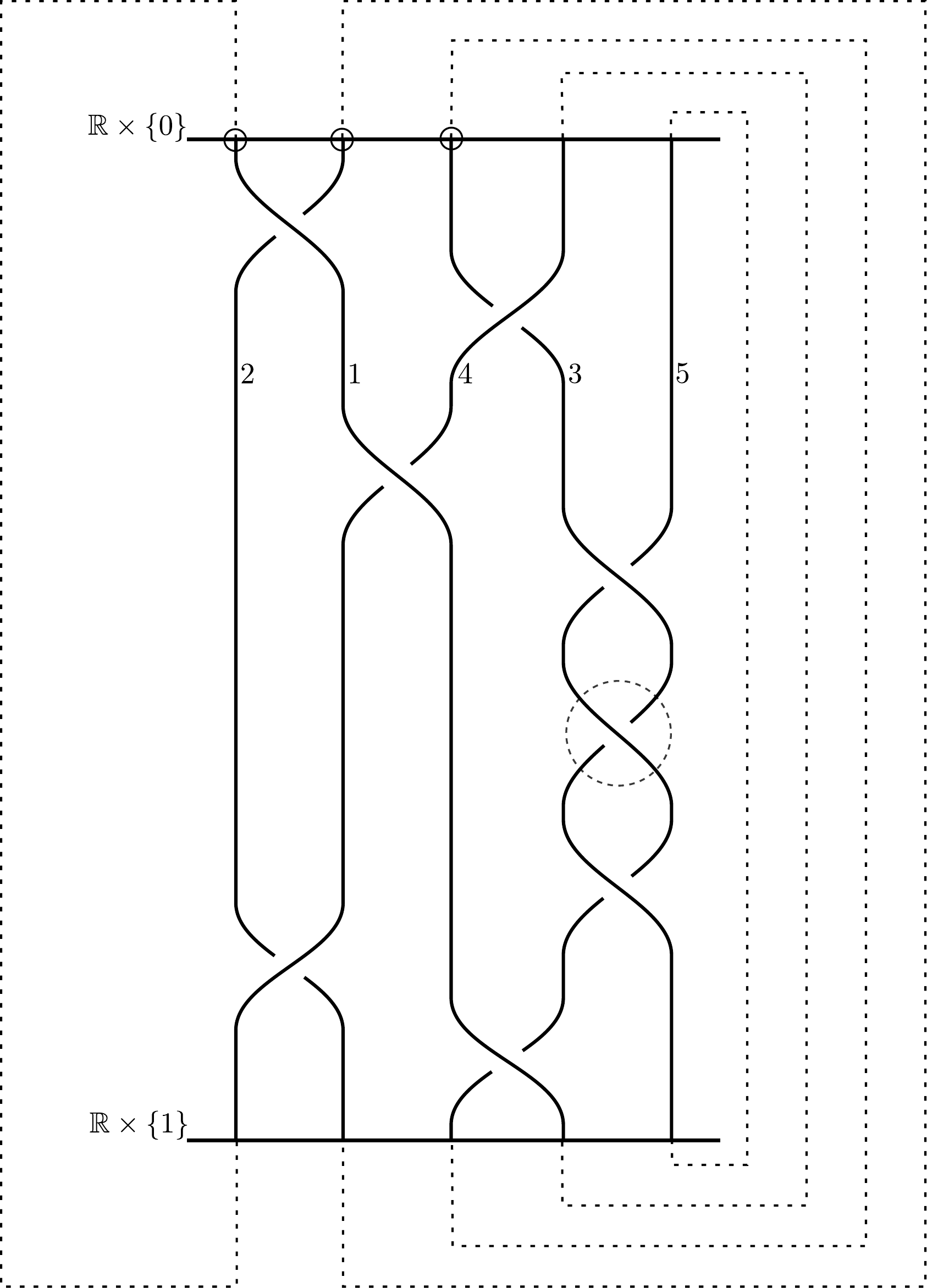}
\caption{An example of a braid $\D$ on 5 strands that has 8 crossings. }
\label{fig:braid}
\end{figure}

\begin{example} {\em Consider the braid diagram $\D$ shown in Figure~\ref{fig:braid} whose corresponding word in $B_5$ is $\sigma_1^{-1}\sigma_3\sigma_2^{-1}\sigma_4^{-3}\sigma_1\sigma_3^{-1}$. The dotted curves show how the braid is closed to form a link diagram. The circled points on top are pivot points of the three components. We have $p(\D)=(14)(2)(35)$ (in its standard form), so its return order is $1\tl 4\tl 2\tl 3\tl 5$. One can see that if we close the braid using disjoint curves starting and ending at the points $(i,1)$, $(i,0)$ for each $1\le i\le 5$ (shown in Figure \ref{fig:braid} with dotted curves), strands $1$ and $4$ are in the same component, strand $2$ is a component by itself and strands $3$ and $5$ are in the same component. The pivot labels of these components are $1$, $2$ and $3$ respectively.} 
\end{example}

Let $c$ be a crossing in a braid diagram $\D$, $O_c$ be the overpassing strand at $c$ and $U_c$ be the underpassing strand at $c$. We say that $c$ is {\em descending} if $\ell(O_c)\tl \ell(U_c)$ and {\em ascending} if $\ell(O_c)\tr \ell(U_c)$. If all crossings in $\D$ are descending (ascending), then we say that $\D$ is a descending (ascending) braid diagram. For example, all crossings except the circled one in Figure \ref{fig:braid} are descending crossings. Switching the circled crossing to a descending crossing will then results in a descending braid. 

\begin{remark}\label{descending_remark}
{\em Descending (ascending) braid diagrams have the easy and well known property that the closure of any such braid diagram is a trivial link, that is, a link topologically equivalent (ambient isotopic) to the disjoint union of several circles contained in the same plane. Take the braid given in Figure~\ref{fig:braid} as an example by flipping the circled crossing to make the braid descending. Using the order $1\tl 4\tl 2\tl 3\tl 5$, we can actually move strands 1, 4, 2, 3 and 5 into planes $z=5$, 4, 3, 2 and 1 respectively so that their projections would still give the descending braid, and that this move will not change the topology of the corresponding link (since there are no crossing changes in the process). It is then easy to see that each component corresponding to a cycle $(14)$, $(2)$ or $(35)$ is an unknot and these knots are splittable since they are separated by planes. }
\end{remark}

\subsection{The HOMFLY polynomial} Let $L_+$, $L_-$, and $L_0$ be oriented link diagrams that coincide except at a small neighborhood of a crossing where the diagrams are presented as in Figure~\ref{fig:cross}: the crossing in $L_+$ ($L_-$) is positive (negative) and is assigned $+1$ ($-1$) in the calculation of the writhe of the link diagram. We say the crossing presented in $L_+$ is of a {\em positive} sign and the crossing presented in $L_-$ is of a {\em negative} sign. The following result appears in~\cite{Fr,Ja}.

\begin{proposition}\label{Ho}
There is a unique function that maps each oriented link diagram $L$ to a two-variable Laurent polynomial with integer coefficients $P(L,z,a)$ such that 
\begin{enumerate}
\item If $L_1$ and $L_2$ are ambient isotopic, then $P(L_1,z,a)=P(L_2,z,a)$.
\item $aP(L_+,z,a) - a^{-1}P(L_-,z,a) = zP(L_0,z,a)$. 
\item If L is an unknot, then $P(L,z,a)=1$. 
\end{enumerate}
\end{proposition}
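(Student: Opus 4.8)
The plan is to prove the two assertions of the proposition separately: \emph{uniqueness} of $P$, which follows from a clean induction, and its \emph{existence}, which is the genuinely difficult half and is exactly where one is tempted to invoke representation theory. I would begin the uniqueness argument by pinning down the value of $P$ on trivial links. Taking a single circle carrying one kink, both crossing-choices $L_+$ and $L_-$ reduce to the unknot by a Reidemeister~I move and so have $P=1$ by~(1) and~(3); the oriented smoothing $L_0$ of that kink is the two-component unlink, so~(2) forces $P=(a-a^{-1})/z$ for it. Writing $\delta=(a-a^{-1})/z$ and iterating, the $n$-component unlink has $P=\delta^{n-1}$.

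Next I would show that the three axioms force the value of $P$ on every diagram, by a lexicographic induction on the pair $(c,d)$, where $c$ is the number of crossings and $d$ is the number of crossings first encountered as an underpass as the diagram is traversed from a fixed choice of base points and component ordering. When $d=0$ the diagram is \emph{descending} and hence represents a trivial link (the braid case is Remark~\ref{descending_remark}); by~(1) its value is therefore the unlink value $\delta^{n-1}$, where $n$ is the number of components. When $d>0$, I would select the first underpass-crossing met in the traversal and solve~(2) for the value at that crossing, using
\[
P(L_+) = a^{-2} P(L_-) + a^{-1} z\, P(L_0), \qquad P(L_-) = a^{2} P(L_+) - a z\, P(L_0),
\]
according to its sign. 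The switched diagram has the same $c$ but strictly smaller $d$, and the smoothed diagram has $c-1$ crossings, so both sit lower in the induction and the value of $P$ is forced.

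Existence is the hard direction. Here I would \emph{define} $P$ by running precisely the recursion above and then prove that the Laurent polynomial it produces is a genuine invariant, checking three things by induction on $c$: independence of the base points and component ordering, independence of the order in which the underpass-crossings are processed, and invariance under each Reidemeister move. Commuting two resolutions at distinct crossings, and invariance under moves~I and~II, are short local computations that close up using the unlink values already computed. The main obstacle is invariance under Reidemeister~III together with base-point and ordering independence: a local triangle move interacts with the global traversal that defines the underpass-crossings, and verifying that the computed polynomial is unaffected forces a delicate case analysis over the orientations and over/under assignments of the three strands involved. This is exactly the bookkeeping that the approach of~\cite{Fr,Ja} avoids, by realizing $P$ as a suitably renormalized Ocneanu trace on the Hecke algebra, where the analogues of the Reidemeister moves (Markov stabilization and conjugation) are verified algebraically in one stroke. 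Since the proposition is quoted from~\cite{Fr,Ja}, I would in the end cite those papers for existence and include only the uniqueness induction in full, which is all that Sections~\ref{s3} and~\ref{s4} require.
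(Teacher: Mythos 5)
The paper offers no proof of this proposition at all: it is stated as a known result with the sentence ``The following result appears in~\cite{Fr,Ja}'' and is used as a black box thereafter. Your proposal is therefore strictly more than the paper provides, and what you supply is correct in outline. The computation of the unlink values via a kinked circle, giving $\delta=(a-a^{-1})z^{-1}$ and $\delta^{n-1}$ for the $n$-component unlink, matches the paper's own remark following the proposition. The lexicographic induction on $(c,d)$ for uniqueness is the standard Lickorish--Millett argument and is sound: switching the first crossing met as an underpass changes only that crossing's status, so $d$ drops by exactly one, and smoothing drops $c$; the base case $d=0$ does require the (standard, but not entirely free) observation that a descending diagram with respect to a choice of base points and component order is ambient isotopic to an unlink --- the general-diagram analogue of the height-lifting argument in Remark~\ref{descending_remark} --- and you correctly flag this. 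You also correctly identify existence (well-definedness of the recursion independent of base points, ordering, and resolution order, plus Reidemeister invariance) as the genuinely hard half, and your decision to cite \cite{Fr,Ja} for it is exactly what the paper itself does for the entire proposition. In short: no gap relative to what the paper needs; your uniqueness argument is a correct supplement, and your treatment of existence coincides with the paper's (a citation).
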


\begin{figure}[htb!]
\includegraphics[scale=.6]{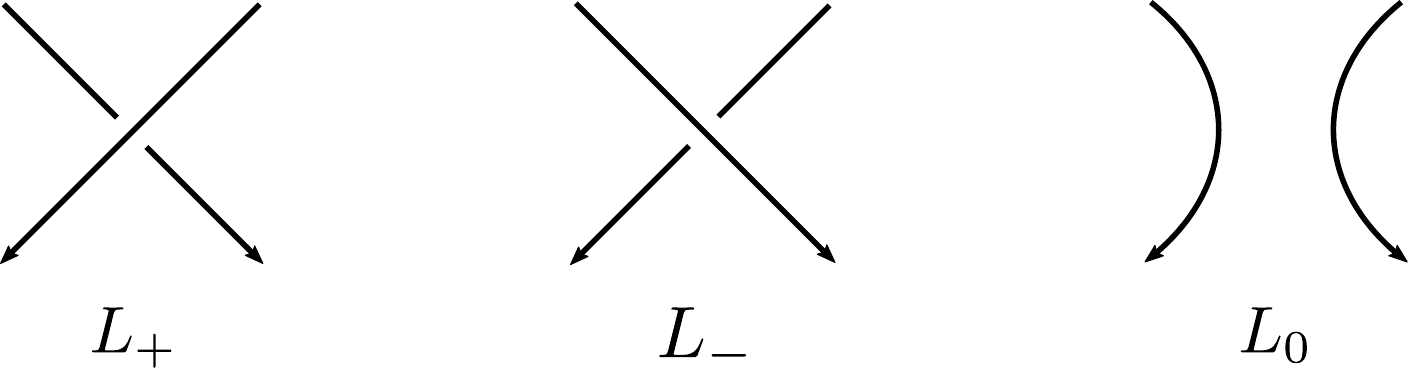}
\caption{The sign convention at a crossing of an oriented link and the splitting of the crossing.}
\label{fig:cross}
\end{figure}

The Laurent polynomial $P(L,z,a)$ is called the {\em HOMFLY polynomial}
of the oriented link $L$. The second condition in the proposition is
called the {\em skein relation} of the HOMFLY polynomial. With conditions (2) and (3) above, one can easily show that if  $L$ is a trivial link with $n$ connected components, then $P(L,z,a)=((a-a^{-1})z^{-1})^{n-1}$ (by applying these two conditions repeatedly to a simple closed curve with $n-1$ twists in its projection).
For our purposes, we will actually be using the following two equivalent forms of the skein relation:
\begin{eqnarray}
P(L_+,z,a)&=&a^{-2}P(L_-,z,a)+a^{-1}zP(L_0,z,a),\label{Skein1}\\
P(L_-,z,a)&=&a^2 P(L_+,z,a)-azP(L_0,z,a).\label{Skein2}
\end{eqnarray}

A rooted and edge-weighted binary tree $\T$ is called a {\em resolving tree} of an oriented link diagram $L$ (for the HOMFLY polynomial)
if the following conditions hold. First, every vertex of $\T$ corresponds to an oriented link diagram. Second, the root vertex of $\T$ corresponds to the original link diagram $L$. Third, each leaf vertex of $\T$ corresponds to a trivial link. Fourth, if we direct $\T$ using the directions of the paths from the root vertex to the leaf vertices, then under this direction any internal vertex has exactly two children vertices and the corresponding link diagrams of these three vertices are identical except at one crossing and they are related by one of the two possible relations at that crossing as shown in Figure~\ref{fig:rtree}, where the edges are weighted and the directions of the edges coincide with the direction of $\T$.

\begin{remark}\label{tree_formula_remark} {\em If $L$ admits a resolving tree $\T$, then one can easily show that $P(L,z,a)$ is a summation in which each leaf vertex of $\T$ contributes exactly one term in the following way. Let $\U$ be the trivial link corresponding to a leaf vertex in $\T$ and let $Q$ be the unique path from the root ($L$) to the leaf vertex ($\U$). Then the contribution of the leaf vertex is simply $((a-a^{-1})z^{-1})^{\gamma(\U)-1}$ multiplied by the weights of the edges in $Q$, where $\gamma(\U)$ is the number of components in $\U$. It is known (and not hard to prove) that resolving trees exist for any given oriented link diagram $L$, and that they are not unique in general. If $L^\p$ is the mirror image of $L$, a resolving tree for $L^\p$ can be obtained from a resolving tree of $L$ by taking mirror images of all link diagrams in it and replacing $a$ by $a^{-1}$ and $z$ by $-z$ in the edge weights. It follows the well known fact that $P(L^\p,z,a)=P(L,-z,a^{-1})$.}
\end{remark}

\begin{figure}[htb!]
\includegraphics[scale=.47]{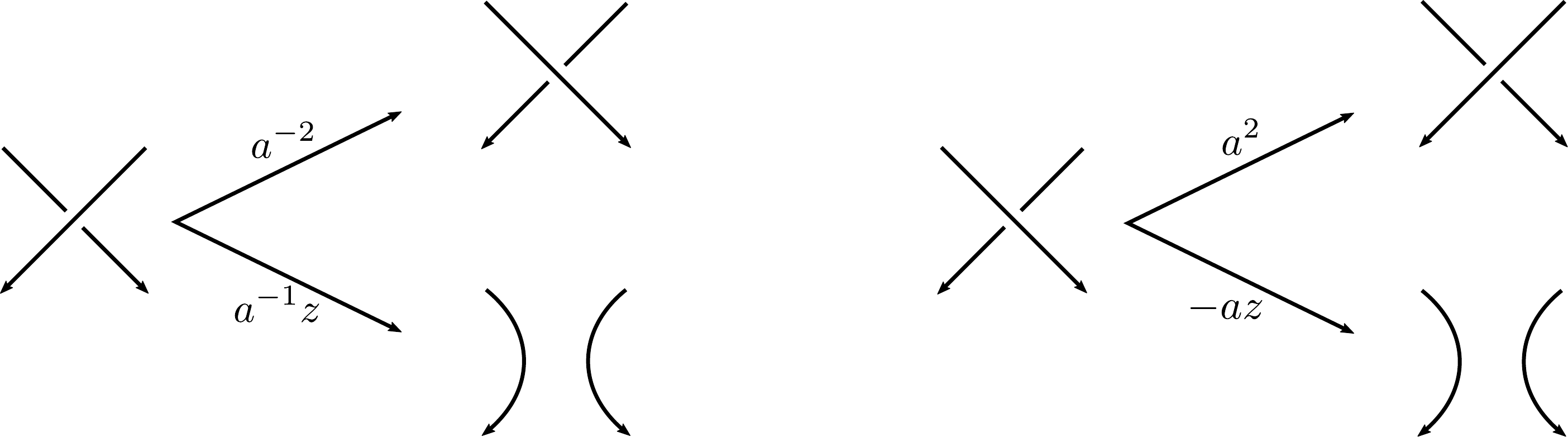}
\caption{A pictorial view of edge weight assignment by the skein relations (\ref{Skein1}) and (\ref{Skein2}). }
\label{fig:rtree}
\end{figure}

Let $\N$ be a braid on $n$ strands and let us define an algorithm (let
us call it Algorithm D) that operates on $\N$ as follows. If $\N$ is
descending, then the algorithm does not do anything and simply returns
$\N$. If $\N$ is not descending, then it contains at least one ascending
crossing. In this case, let us travel through $\N$ naturally, until we
run into the first ascending crossing $c$, which can be a positive or a
negative crossing as shown in either the left hand side or the right
hand side of Figure~\ref{fig:rtree}. The algorithm then returns the two
corresponding braids split from the original one as shown in
Figure~\ref{fig:cpd}, which we will name as $\N_f$ and $\N_s$ ($f$ for
``flipping" the crossing and $s$ for ``smoothing" the crossing). Notice
that $\N_f$ has one less ascending crossing than $\N$ does since $\N_f$
and $\N$ are identical (including the return order of the strands)
except at the crossing $c$. But one cannot say the same for $\N_s$ since
it does not share the same strands (and the return order of the strands)
with $\N$. However, $\N$ and $\N_s$ share the same strands and crossings
up to crossing $c$ (which are all descending) while travel through them
naturally, and $\N_s$ has one less crossing than $\N$. So if Algorithm D
is repeatedly applied to a braid and the resulting braids of this
operation, then this process will end after a finite number of repeats
(in fact this number is bounded above by the number of crossings in
$\D$). It follows that we can construct a special resolving tree $\T$
for ${\D}$ (as a link diagram) as follows. We apply Algorithm D to $\D$
first if it is not descending, and then apply the algorithm again to the
two resulting braids (if they are not descending), and so on, until all
the leaf vertices are descending braids. The closures of the braids
involved in the process are the vertices of $\T$. In particular, the
closures of the resulting descending braids form the leaf vertices of
$\T$ since they are all trivial links. By assigning appropriate weights
to the edges of $\T$, one can easily verify that $\T$ is indeed a
resolving tree of ${\D}$. By the way it is constructed, $\T$ is
unique. In a similar fashion, we can also construct another (unique)
resolving tree of ${\D}$ by replacing ``descending" with ``ascending" in
the above (the algorithm corresponding to Algorithm D would be called
Algorithm A). To distinguish the two, we will call the first the
``descending tree" and the later the ``ascending tree" of ${\D}$, and
denote them by $\T^{\downarrow}({\D})$ and $\T^{\uparrow}({\D})$
respectively. An example of a descending tree is shown in the right hand
side of Figure~\ref{fig:cpd}.

\begin{figure}[htb!]
\includegraphics[scale=.35]{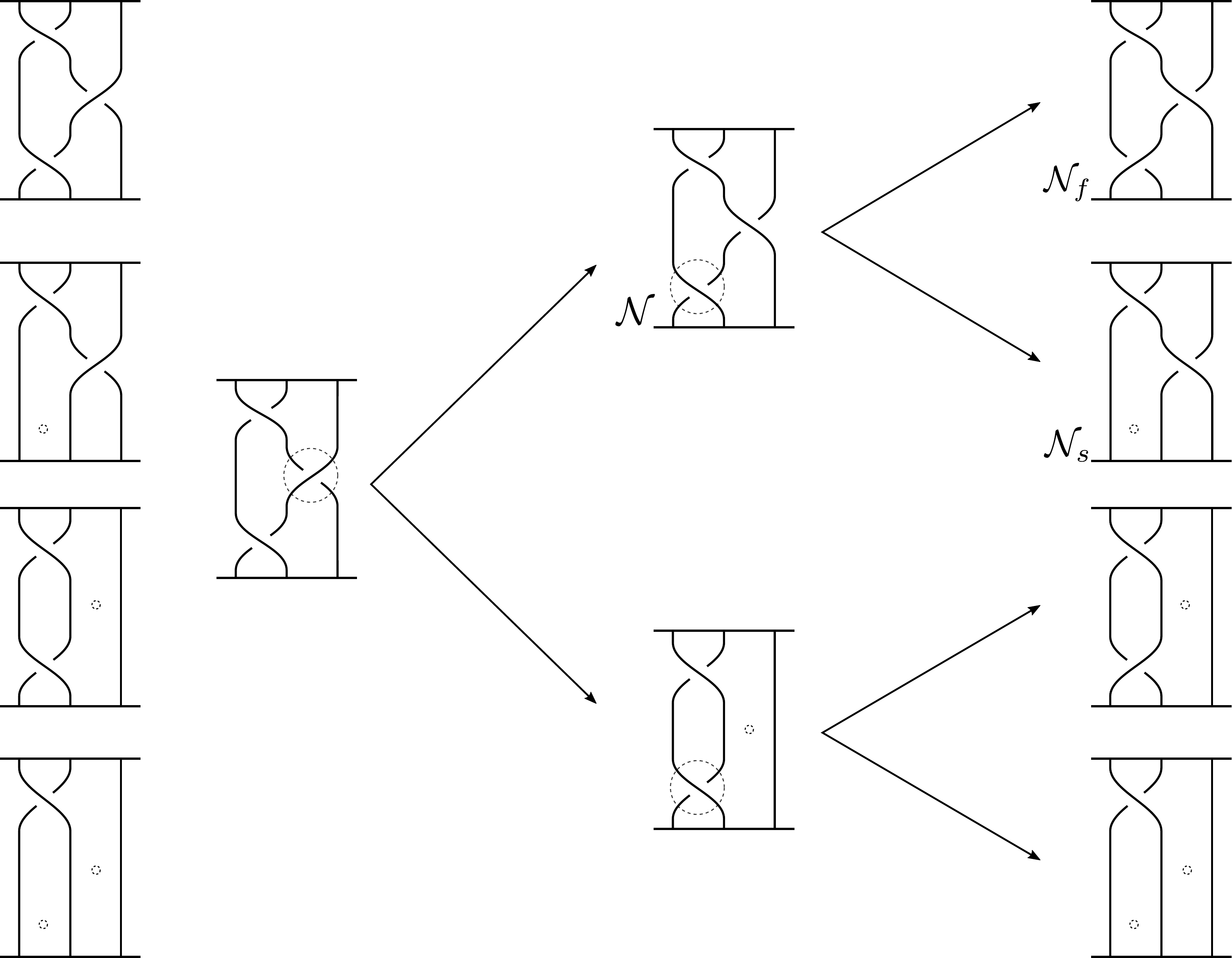}
\caption{Left: Admissible circuit partitions of the braid on the top (including itself), to be covered in Section \ref{acp}; Right: The descending tree of (the closure of) the braid on the top left (only the braids are shown). }
\label{fig:cpd}
\end{figure}

\begin{remark}\label{R_cor}
{\em For a given braid $\U$ obtained from $\D$ by flipping and smoothing
  some of its crossings, there is a simple way to check whether $\U\in
  \F^{\downarrow}(\D)$ by the way $\T^{\downarrow}(\D)$ is
  constructed. We travel through $\U$ naturally and visit each crossing
  of $\D$ exactly twice.  For each crossing 
of $\D$ that we encounter in this process for the first time (including the smoothed
ones, these are marked by small circles in Figure \ref{fig:cpd}) we
perform the following test. If this crossing is smoothed in $\U$,
we check whether we would be approaching it from its underpass if the crossing (in its original form in $\D$) were not smoothed.
On the other hand, if the crossing is also a crossing in $\U$ (which may or may not have been flipped from its original form in $\D$), we check whether we are approaching it from its overpass. If all crossings pass this
check, then $\U\in \F^{\downarrow}(\D)$, otherwise $\U\not\in
\F^{\downarrow}(\D)$.} 
\end{remark}

\section{The HOMFLY polynomial of a closed braid}\label{s3}

In this section, we derive the main result of this paper, namely the two formulas of the HOMFLY polynomial of a  braid ${\D}$ based on $\T^{\downarrow}({\D})$ and $\T^{\uparrow}({\D})$ respectively, given in Theorem \ref{p3}. Let $\N$ be a vertex in $\T^{\downarrow}({\D})$ (or $\T^{\uparrow}({\D})$), $w(\N)$ be the writhe of $\N$, $\gamma(\N)$ be the number of components in ${\N}$. Note that $\N$ is obtained from $\D$ by applying Algorithm D (or Algorithm A) repeatedly, and in this process some crossings of $\D$ may have been smoothed. Let $t(\N)$ be the number of smoothed crossings and $t'(\N)$ be the number of smoothed crossings that are negative. It is trivial to note that $t(\N)$ is simply the difference between the number of crossings in $\D$ and the number of crossings in $\N$.

\begin{lemma}\label{l3}
If $\U$ is a descending braid on $n$ strands, then
$\gamma(\U)-w(\U)=n$. On the other hand, if $\V$ is an ascending braid on $n$ strands, then $\gamma(\V)+w(\V)=n$.
\end{lemma}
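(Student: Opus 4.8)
The plan is to prove both claims at once by induction on the number of crossings, using Algorithm D (resp. Algorithm A) as the inductive engine, since each smoothing of a crossing changes the strand count by a controlled amount. First I would handle the base case of a \emph{crossingless} braid: with no crossings, the $n$ strands close up into $n$ disjoint circles, so $\gamma=n$ and $w=0$, and both identities $\gamma-w=n$ and $\gamma+w=n$ hold trivially. This base case is the anchor for both the descending and ascending statements, which are mirror images of each other.

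For the inductive step in the descending case, I would take a descending braid $\U$ on $n$ strands with at least one crossing, locate any crossing $c$, and examine the effect of \emph{smoothing} it (the $0$-resolution that turns $\U$ into a braid $\U_s$ on $n$ strands with one fewer crossing). The key local observation is the interplay between the change in writhe and the change in the number of components. Smoothing a single crossing either merges two components into one or splits one component into two, changing $\gamma$ by exactly $\pm 1$; simultaneously, removing a crossing of sign $\varepsilon\in\{+1,-1\}$ changes the writhe $w$ by $-\varepsilon$. The heart of the argument is to verify that for a \emph{descending} crossing these two changes are synchronized so that the quantity $\gamma-w$ is preserved, i.e. $\gamma(\U)-w(\U)=\gamma(\U_s)-w(\U_s)$. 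Since $\U_s$ is still descending on $n$ strands but has fewer crossings, the inductive hypothesis gives $\gamma(\U_s)-w(\U_s)=n$, completing the step. The ascending case follows by the mirror-image symmetry (flipping all crossings sends $w\mapsto-w$ and turns descending into ascending), which converts $\gamma-w=n$ into $\gamma+w=n$.

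The main obstacle I expect is the local sign bookkeeping: one must show precisely that, for a descending crossing, the merge/split behavior of $\gamma$ under smoothing is correlated with the crossing's sign in the right way to keep $\gamma-w$ invariant. Concretely, I would argue using the return order and the structure of a descending diagram as described in Remark~\ref{descending_remark}: a descending braid can be isotoped so that strand labels sit in nested planes, which rigidly constrains how the two arcs at any crossing belong to components and hence how smoothing reconnects them. The cleanest route is probably to track a single strand-label $s_{i1}$ being the pivot and analyze, crossing by crossing in the return order, how each descending crossing's smoothing trades one unit of writhe against one unit of $\gamma$; alternatively, one may invoke the planar-separation picture to compute $\gamma$ and $w$ directly in closed form. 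Either way, once the one-crossing sign analysis is pinned down, the induction is routine and the ascending statement is immediate by symmetry.
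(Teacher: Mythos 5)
Your overall strategy---induction on the number of crossings, smoothing one crossing at a time---does not go through, because both claims on which the inductive step rests are false. First, the braid $\U_s$ obtained by smoothing a crossing of a descending braid $\U$ need not be descending: the oriented smoothing reconnects the two strands passing through that crossing, so the strand labels, the permutation and hence the return order $\tl$ all change, and a crossing that was descending in $\U$ can become ascending in $\U_s$. (The paper warns about exactly this when it notes that $\N_s$ ``does not share the same strands (and the return order of the strands) with $\N$.'') Second, the claimed synchronization between the sign of the smoothed crossing and the merge/split behavior of $\gamma$ does not hold. A concrete counterexample to both: take the $2$-strand diagram with two crossings in which the strands swap and then swap back, with strand $1$ the overstrand at both crossings. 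Both crossings are descending (the permutation is the identity, so the return order is $1\tl 2$), the upper crossing is negative and the lower one is positive, giving $\gamma=2$, $w=0$, $\gamma-w=2=n$ as the lemma asserts. Smoothing the upper (negative) crossing \emph{merges} the two components rather than splitting one, so $\gamma$ drops to $1$ while $w$ becomes $+1$, and $\gamma-w$ falls to $0$; moreover the surviving crossing is ascending with respect to the new return order, so the inductive hypothesis cannot even be applied. Your proposed invariance would force every negative descending crossing to be a self-crossing of a single component, which fails whenever such a crossing joins two distinct components.

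The route you mention only in passing---``invoke the planar-separation picture to compute $\gamma$ and $w$ directly in closed form''---is in fact the paper's entire proof, and it is the part that actually has to be carried out. The paper lifts the components of $\U$ into disjoint parallel planes as in Remark~\ref{descending_remark}, observes that crossings between distinct components contribute zero net writhe (split components have linking number zero), and then shows by writhe-preserving Reidemeister II and III moves that a component made of $m$ strands can be slid into a standard position with exactly $m-1$ crossings, all negative, so its self-writhe is $-(m-1)$; summing over components gives $w(\U)=-n+\gamma(\U)$. If you want to retain an inductive flavor, you would need to induct on a quantity whose reduction preserves descendingness (for instance on the cycle structure of $p(\U)$), not on smoothing crossings.
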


\begin{proof}
Since $\U$ is descending, by Remark~\ref{descending_remark}, ${\U}$ can
be realized by such space curves that its components are separated by
planes that are parallel to the $xy$-plane, hence the writhe
contribution of crossings whose strands belong to different components
is zero. Let $\tau=\gamma(\U)$, $\C_1$, $\C_2$, \ldots, $\C_{\tau}$ be the
cycles of $p(\U)$ and $l(\C_j)$ be the length of the cycle $C_j$. We
have $\sum_{1\le j\le \tau}l(C_j)=n$ and $\sum_{1\le j\le
  \tau}w(C_j)=w(\U)$. We claim that for each component $\C_j$ of ${\U}$,
$l(\C_j)=1-w(\C_j)$. For simplicity let $m=l(\C_j)$ and let the labels
of the strands in $\C_j$ be $s_1$, \ldots, $s_m$, ordered by their return
order in $\U$. Without loss of generality, assume that strand $s_{1}$ is
in the plane $z=m$, strand $s_2$ is in the plane $z=m-1$, ... and strand
$s_m$ is in the plane $z=1$. Furthermore, the curve connecting the
ending point of $s_i$ to the starting point of strand $s_{i+1}$ is
bounded between the two planes $z=m-i+1$ and $z=m-i$ for $1\le i\le
m$. We first observe that each strand and part of the curves connecting
to its ends can be deformed to straight line segments (within the plane
that it is in), in a form as shown in the left hand side of Figure
\ref{fig:layers}, where each strand resides in a plane parallel to $z=0$ (and the equation of the plane is marked in the figure). 
The part of the connecting curve attached to the strand residing in the same plane is marked by solid lines and the dotted curves have their end points on
different planes parallel to the $xy$-plane whose $z$ coordinates differ
by exactly one, with the exception of the dotted curve on the far left
(which is between the planes $z=1$ and $z=m$).  
\begin{figure}[htb!]
\includegraphics[scale=.6]{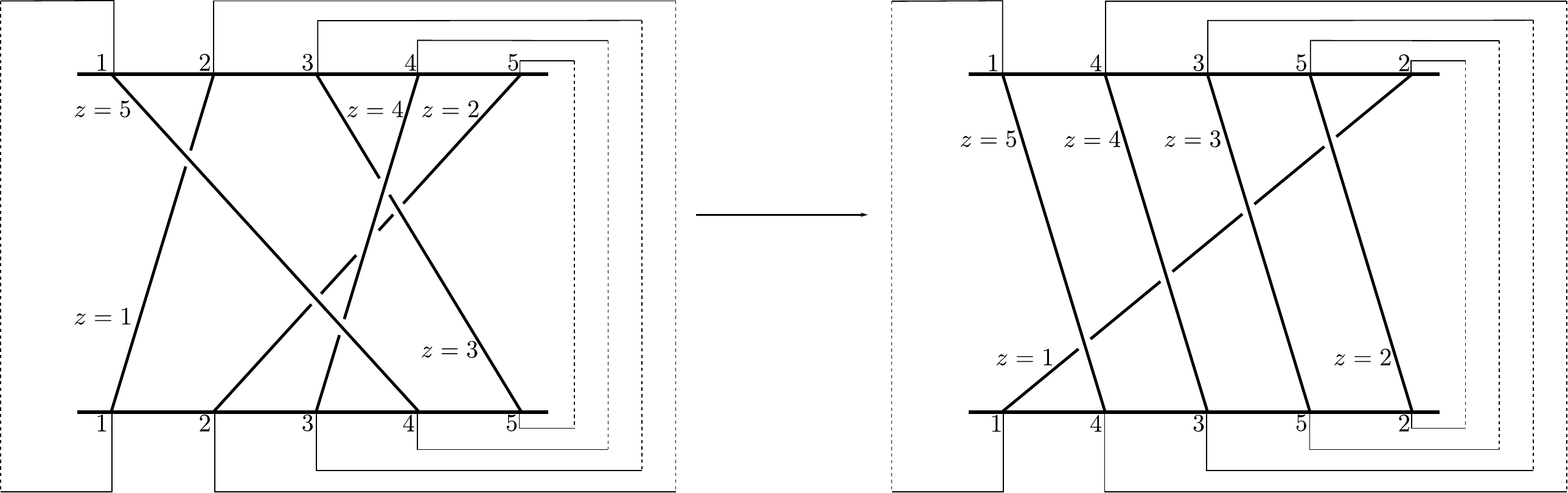}
\caption{ Left: A connected component corresponding to the cycle $\C_j=(14352)$ with its strands and the connecting curves straightened. Right: A special re-arrangement of the strands of $\C_j$ through a finite sequence of Reidemeister moves of type II and III.}
\label{fig:layers}
\end{figure}

By Remark \ref{R_remark},
this deformation does not change the writhe. For the same reason, these straight
line segments can freely slide within the plane they reside in since
there are no other curves in that plane. See Figure \ref{slide} for one
such move. In particular, nothing can preventing us from sliding them
into the position where the strands $s_1,\ldots,s_{m-1}$ are parallel
lines arranged in this order from left to right, as shown in the right
hand side of Figure~\ref{fig:layers}. Since all moves are made within
the planes where these curves reside in, the writhe does not change by
Remark \ref{R_remark}. We see that $w(\C_j)=-(m-1)$ from the right hand side of Figure~\ref{fig:layers} since there are exactly $m-1$ crossings in the projection and all of them are negative. Thus $w(\U)=\sum_{1\le j\le \tau}w(C_j)=-\sum_{1\le j\le \tau}(l(C_j)-1)=-n+\tau$, {\em i.e.}, $\gamma(\U)-w(\U)=n$. An ascending braid diagram $\V$ is the mirror image of a descending braid
diagram $\U$. It is known that $w(\U)=-w(\V)$ and it follows that
$\gamma(\V)+w(\V)=n$. 
\end{proof}

\begin{figure}[htb!]
\includegraphics[scale=.6]{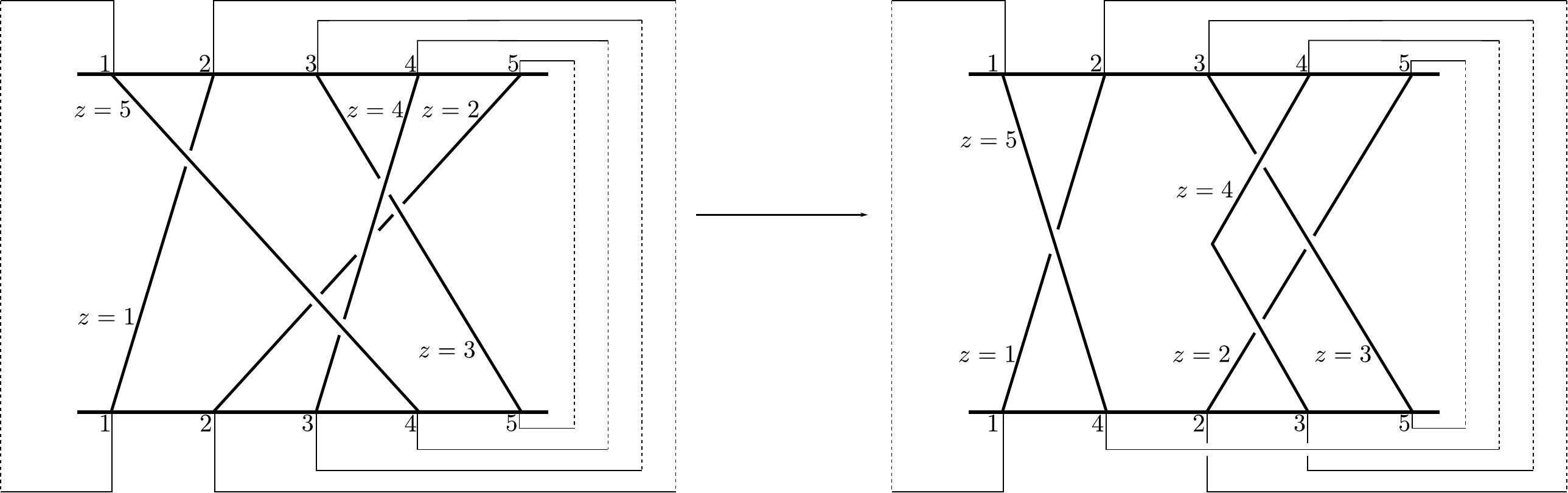}
\caption{Left: The strand with label $1$ is the only strand in the top plane and can be slid freely within the plane without causing self intersection of the link; Right: The same strand after a sliding move.}
\label{slide}
\end{figure}

\medskip
\begin{theorem}\label{p3}
Let $\D$ be a braid on $n$ strands, $\F^{\downarrow}({\D})$ and $\F^{\uparrow}({\D})$ be the set of leaf vertices of $\T^{\downarrow}({\D})$ and $\T^{\uparrow}({\D})$ respectively, then 
\begin{equation}\label{e1}
P({\D},z,a) = a^{1-n-w({\D})} \sum_{\U\in \F^{\downarrow}({\D})} (-1)^{t'(\U)}z^{t(\U)}((a^2-1)z^{-1})^{\gamma(\U)-1}
\end{equation}
\begin{equation}\label{e2}
P({\D},z,a) = a^{n-1-w({\D})} \sum_{\V\in \F^{\uparrow}({\D})} (-1)^{t'(\V)}z^{t(\V)}((1-a^{-2})z^{-1})^{\gamma(\V)-1}
\end{equation}
where $t(\U)$ is the number of crossings in $\D$ that are smoothed in obtaining $\U$ and $t^\p(\U)$ is the number of negative crossings among these smoothed crossings.
\end{theorem}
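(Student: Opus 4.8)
The plan is to compute $P(\D,z,a)$ by summing the leaf contributions of the descending tree $\T^{\downarrow}(\D)$ as described in Remark~\ref{tree_formula_remark}, and then to simplify each contribution into the summand of (\ref{e1}). First I would read off the edge weights of $\T^{\downarrow}(\D)$ from the skein relations. At an internal vertex $\N$ the first ascending crossing $c$ is split into the flipped braid $\N_f$ and the smoothed braid $\N_s$. If $c$ is positive then $\N$ plays the role of $L_+$, and (\ref{Skein1}) assigns weight $a^{-2}$ to the edge $\N\to\N_f$ and weight $a^{-1}z$ to $\N\to\N_s$; if $c$ is negative then $\N$ plays the role of $L_-$, and (\ref{Skein2}) assigns weight $a^2$ to $\N\to\N_f$ and weight $-az$ to $\N\to\N_s$. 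Since flipping an ascending crossing turns it into a descending one (it swaps the labels of the over- and underpass) and smoothing deletes it, along any root-to-leaf path each crossing of $\D$ is touched at most once and always in its original form; in particular the sign recorded at each flip or smoothing is its original sign in $\D$, so the smoothing weights contribute exactly the factor $(-1)^{t'(\U)}z^{t(\U)}$ together with a power of $a$.

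Multiplying the edge weights along the path to a leaf $\U$ I would collect the exponent of $a$: writing $f(\U)$ and $g(\U)$ for the numbers of flipped positive and flipped negative crossings, the smoothings contribute $a^{-t(\U)+2t'(\U)}$ and the flips contribute $a^{-2f(\U)+2g(\U)}$. The crucial step is to recognize this exponent as a writhe difference. Each smoothing of a crossing of sign $s$ changes the writhe by $-s$ and each flip of a crossing of sign $s$ changes it by $-2s$; since smoothing or flipping one crossing leaves the signs of all remaining crossings unchanged (all strands are oriented downward), summing these increments telescopes to $w(\U)-w(\D)=-t(\U)+2t'(\U)-2f(\U)+2g(\U)$, that is, the exponent of $a$ equals $w(\U)-w(\D)$.

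Now I would invoke Lemma~\ref{l3}. Because smoothing and flipping preserve the number of strands, $\U$ is a descending braid on $n$ strands, so $w(\U)=\gamma(\U)-n$ and the exponent becomes $\gamma(\U)-n-w(\D)$. Hence the contribution of $\U$ is $(-1)^{t'(\U)}z^{t(\U)}a^{\gamma(\U)-n-w(\D)}((a-a^{-1})z^{-1})^{\gamma(\U)-1}$. Factoring $a^{\gamma(\U)-n-w(\D)}=a^{1-n-w(\D)}a^{\gamma(\U)-1}$ and using $a(a-a^{-1})=a^2-1$ to absorb $a^{\gamma(\U)-1}$ into the bracket rewrites this as $a^{1-n-w(\D)}(-1)^{t'(\U)}z^{t(\U)}((a^2-1)z^{-1})^{\gamma(\U)-1}$; summing over $\U\in\F^{\downarrow}(\D)$ yields (\ref{e1}). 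Formula (\ref{e2}) follows by running the identical argument on $\T^{\uparrow}(\D)$, where the only change is that the second part of Lemma~\ref{l3} gives $w(\V)=n-\gamma(\V)$, so the $a$-exponent becomes $n-\gamma(\V)-w(\D)=(n-1-w(\D))-(\gamma(\V)-1)$ and the elementary identity $a^{-1}(a-a^{-1})=1-a^{-2}$ is absorbed instead; alternatively one can deduce (\ref{e2}) from (\ref{e1}) by passing to the mirror image via $P(L^\p,z,a)=P(L,-z,a^{-1})$ as in Remark~\ref{tree_formula_remark}.

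I expect the main obstacle to be the bookkeeping in the middle paragraph: establishing that every smoothed or flipped crossing carries its original sign (so that $t'$ and the flip counts are well defined with respect to $\D$), and verifying that the accumulated exponent of $a$ telescopes precisely to $w(\U)-w(\D)$. Once this identity is in hand, the appeal to Lemma~\ref{l3} and the factorizations $a^2-1=a(a-a^{-1})$ and $1-a^{-2}=a^{-1}(a-a^{-1})$ finish the proof mechanically.
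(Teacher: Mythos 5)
Your proposal is correct and follows essentially the same route as the paper: sum the leaf contributions of $\T^{\downarrow}(\D)$ via Remark~\ref{tree_formula_remark}, identify the accumulated $a$-exponent of the edge weights with the writhe difference $w(\U)-w(\D)$, and then apply Lemma~\ref{l3} together with the factorization $a^2-1=a(a-a^{-1})$ (and its mirror for the ascending tree). Your explicit telescoping of the writhe increments is just a more detailed justification of the step the paper reads off directly from Figure~\ref{fig:rtree}.
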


\begin{proof} Let us consider the descending tree first. By Remark~\ref{tree_formula_remark}, the contribution of $\U\in \F^{\downarrow}({\D})$ to
$P({\D},z,a)$ is $((a-a^{-1})z^{-1})^{\gamma(\U)-1}$ multiplied by the
  weights of the edges on the unique path of $\T^{\downarrow}({\D})$
  from ${\D}$ to ${\U}$. As shown in Figure~\ref{fig:rtree}, the degree
  of $a$ in the weight of an edge is exactly the change of writhe from
  the starting vertex of the edge (remember that it is directed from the
  root to the leaf) to the ending vertex of the edge, whereas a $z$ term
  in the weight of the edge indicates that the ending vertex is obtained
  from the starting vertex by a crossing smoothing and a negative sign in the weight indicates that the smoothed crossing is a negative crossing. It follows that the total contribution of 
$\U$ is
\begin{eqnarray*}
 &&(-1)^{t'(\U)}z^{t(\U)}a^{w(\U)-w({\D})}((a-a^{-1})z^{-1})^{\gamma(\U)-1}\\
 &=& (-1)^{t'(\U)}z^{t(\U)}a^{w(\U)-w({\D})-\gamma(\U)+1}((a^2-1)z^{-1})^{\gamma(\U)-1}\\
 &=& a^{1-n-w({\D})}\left[(-1)^{t'(\U)}z^{t(\U)}((a^2-1)z^{-1})^{\gamma(\U)-1}\right]
\end{eqnarray*}
by Lemma~\ref{l3}. This proves (\ref{e1}). Equation (\ref{e2}) can be proved in a similar fashion and is left to the reader.
\end{proof}

\begin{remark}{\em 
Let $L$ be a link with braid index $n$ and  ${\D}$ a   braid representation of $L$ where $\D$ is a braid of $n$ strands. Let $E$ and $e$ be the maximum and minimum degrees of $a$ in $P(L,z,a)$. The $a$-span of $P(L,z,a)$ is defined as $E-e$. Since $\gamma(\U)\le n$ for any $\U\in \F^{\downarrow}({\D})$,
formulas (\ref{e1}) and (\ref{e2}) imply that $E\le 1-n-w({\D})+2(n-1)=n-w({\D})-1$ and $e\ge n-1-w({\D})-2(n-1)=-n-w({\D})+1$. It follows that $a$-span$/2+1\le n$, that is, the Morton-Frank-Williams inequality is a direct consequence of Theorem \ref{p3}.}
\end{remark}
 
\section{The braid index of reduced alternating braids}\label{s4}

A link is {\em splittable} if there exist components of the link that lie on different sides of a topological plane in $\R^3$ and a link is {\em non-splittable} if it is not splittable~\cite{A}. A braid diagram is {\em reduced} if its closure is a reduced link diagram. A braid diagram is {\em alternating} if its closure is an alternating link diagram. An oriented link is a {\em reduced alternating braid} on $n$ strands if it can be represented by a reduced alternating braid diagram on $n$ strands. A reduced alternating braid is an alternating link but the converse is not true in general. Figure \ref{fig:bd} is an example a non-splittable, reduced alternating braid diagram. In this section,  we prove the following theorem with a simple and direct proof based on our results from the last section.

\begin{theorem}\cite{Mu}\label{Theorem4.1} Let $\D$ be the closure of a reduced alternating braid on $n$ strands, then the braid index of $\D$ is $n$.
\end{theorem}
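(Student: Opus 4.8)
The plan is to use the Morton--Frank--Williams inequality (already derived as a consequence of Theorem~\ref{p3}) as the lower bound, and then to produce a matching upper bound by computing the $a$-span of $P(\D,z,a)$ exactly. Since $\D$ is presented as a braid on $n$ strands, the braid index is automatically at most $n$, so it suffices to show that the braid index is at least $n$. By MFW it is enough to prove that the $a$-span of the HOMFLY polynomial equals exactly $2n-2$, since then $a\text{-span}/2+1=n\le \text{braid index}\le n$. Thus the entire problem reduces to the sharpness claim announced in the abstract: for a reduced alternating braid diagram on $n$ strands, the $a$-span is precisely $2n-2$.

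The key tool is the pair of formulas \eqref{e1} and \eqref{e2}. From the remark following Theorem~\ref{p3}, we already know $E\le n-w(\D)-1$ and $e\ge -n-w(\D)+1$, which gives $a\text{-span}\le 2n-2$. So the heart of the matter is to show these extreme degrees are actually \emph{attained}, i.e.\ that there is no cancellation killing the top coefficient $a^{n-w(\D)-1}$ in \eqref{e1} and the bottom coefficient $a^{-n-w(\D)+1}$ in \eqref{e2}. The top $a$-degree in \eqref{e1} comes precisely from the leaf vertices $\U\in\F^{\downarrow}(\D)$ with the maximal number of components $\gamma(\U)=n$; such a leaf is a trivial braid on $n$ disjoint strands, i.e.\ one in which every crossing has been smoothed. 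First I would identify these maximal leaves and argue that in a reduced alternating diagram there is essentially a distinguished such leaf (the one obtained by smoothing all crossings along the descending algorithm), so that its contribution cannot be cancelled by other terms of the same $a$-degree.

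The crucial point is to control the coefficient of the extreme power of $a$. The coefficient of $a^{n-w(\D)-1}$ is, up to sign and a power of $z$, the sum over all $n$-component leaves $\U$ of $(-1)^{t'(\U)}z^{t(\U)-(n-1)}$; since $\gamma(\U)=n$ forces $t(\U)$ to equal the total number of crossings minus $(n-1)$, all such leaves share the same $z$-degree, and the question becomes whether these signed terms can cancel. Here is where the alternating hypothesis does the work: in an alternating diagram the crossings encountered along a natural traversal have a rigidly controlled sign pattern, so the signs $(-1)^{t'(\U)}$ align rather than cancel. I would make this precise by showing that for a reduced alternating braid there is a \emph{unique} leaf realizing $\gamma=n$ in each tree (or, more robustly, that all contributing leaves carry the same sign), so the extreme coefficient is $\pm 1$ and in particular nonzero. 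The analogous argument applied to the ascending tree $\T^{\uparrow}(\D)$ via \eqref{e2} pins down the minimal $a$-degree. Combining the two nonvanishing extremes yields $a\text{-span}=2n-2$ exactly.

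The main obstacle I anticipate is precisely this no-cancellation argument for the extreme coefficient. It is easy to bound the degree; the genuine content is ruling out that the several maximal-component leaves conspire to cancel. This is exactly the step where the \emph{reduced} and \emph{alternating} hypotheses must both be used---reducedness to exclude nugatory crossings that would allow a component to split off trivially (and thereby shift the component count), and alternation to force a coherent sign pattern. I expect the cleanest route is to characterize the all-smoothed trivial leaf combinatorially via Remark~\ref{R_cor} and to show that the alternating structure makes its sign and $z$-degree coincide with, and dominate (or uniquely realize), the extreme $a$-power, perhaps by a parity or induction-free counting argument on the Seifert circles of the alternating diagram. Once the extreme coefficient is shown to be $\pm1$, the theorem follows immediately from MFW with no further estimation.
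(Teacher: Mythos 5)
Your overall strategy is the paper's: bound the braid index above by $n$ trivially, and below by showing the $a$-span of $P(\D,z,a)$ equals $2n-2$ and invoking Morton--Frank--Williams. But the execution of the sharpness step contains genuine errors. First, you misidentify the extremal leaves: a leaf $\U\in\F^{\downarrow}(\D)$ with $\gamma(\U)=n$ is a braid whose permutation is the identity, not ``a trivial braid on $n$ disjoint strands in which every crossing has been smoothed.'' The all-smoothed diagram is generally \emph{not} in $\F^{\downarrow}(\D)$ at all: by the test of Remark~\ref{R_cor}, the first crossing of $\D$ in each even gap is first approached along its overpass and therefore cannot be smoothed in any leaf of the descending tree. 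The distinguished leaf the paper constructs, $\U^*$, retains two crossings in every even gap (the first kept as is, the last flipped) and smooths everything else. Second, your claim that ``$\gamma(\U)=n$ forces $t(\U)$ to equal the total number of crossings minus $(n-1)$, so all such leaves share the same $z$-degree'' is false: $\gamma(\U)=n$ only forces each gap to retain an even number of crossings, and different leaves with $\gamma=n$ retain different numbers. (The count $c(\D)-n+1$ is the one relevant to the $\gamma(\U)=1$ leaves in Theorem~\ref{T4.4}, not here.)

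Consequently your proposed no-cancellation mechanism --- that the alternating structure forces all maximal-component leaves to carry the same sign --- is not the argument that works, and you give no proof of it. The actual mechanism is the opposite of ``same $z$-degree, aligned signs'': among all leaves with $\gamma(\U)=n$, the leaf $\U^*$ is shown (Claims 3 and 4 of the paper's proof of Lemma~\ref{l4}) to be the \emph{unique} one maximizing $t(\U)$, i.e.\ minimizing the number of retained crossings, so its monomial $\pm z^{t(\U^*)-n+1}a^{n-1-w(\D)}$ has strictly higher $z$-degree than every other contribution to the coefficient of $a^{n-1-w(\D)}$ and cannot be cancelled regardless of signs. Proving that uniqueness is where reducedness and alternation enter (each gap of a reduced non-splittable diagram has at least two crossings; alternation fixes the sign pattern gap by gap; Remark~\ref{R_cor} forces which crossings can be smoothed or flipped), and none of that is supplied in your sketch. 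The dual argument with $\T^{\uparrow}(\D)$ and equation~(\ref{e2}) for the minimal $a$-degree, and the reduction of the negative-leading case to the positive-leading one by mirror image, also need to be carried out. As written, the proposal identifies the right target but leaves the entire substantive content unproved and rests on two incorrect structural claims about the extremal leaves.
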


\begin{remark}{\em
The above theorem is a special case of a more general theorem on a class of oriented alternating fibered links due to Murasugi \cite{Mu} where the proof relies on using a sequence of lemmas shown by induction. The alternating links in this class are the $^\ast$-products of $(2,n)$ torus links (which include the alternating closed braids) \cite{Mu} and are also known to be fibered \cite{Sto2}. The fact that the reduced alternating closed braids are fibered can be established directly by using another result due to Murasugi, which states that an alternating link is fibered if the leading coefficient of its Alexander polynomial is $\pm 1$ \cite{Mu2}, and by proving that the leading coefficient of the Alexander polynomial of a reduced alternating closed braid is indeed $\pm 1$. This latter fact seems to be known but we failed to find a direct proof of it in the literature so we will provide a short one at the end of this section, which also serves as another application of  Theorem \ref{p3} and our method. We also note that fact that the reduced alternating closed braids are fibered is a direct consequence of a result due to Stallings in which he proved that all homogeneous closed braids (which include the alternating ones) are fibered \cite{Stallings}.}
\end{remark}   

Before we proceed to the proof of the theorem, we note that it suffices for us to prove this result for  reduced alternating braids that are non-splittable. Since if not, say $\D=\D_1\cup \D_2\cup \cdots\cup \D_k$ with the $\D_j$'s being the non-splittable components of $\D$, we can simply apply our result to each $\D_j$. Since the braid index of a link equals the sum of the braid indices of its non-splittable components, and $a$-span$/2+1$ of $P(\D,z,a)$ is the sum of  the ($a$-span$/2+1$)'s of the $P(\D_j,z,a)$'s as one can easily check, the general result then follows. 

\begin{figure}[htb!]
\includegraphics[scale=.35]{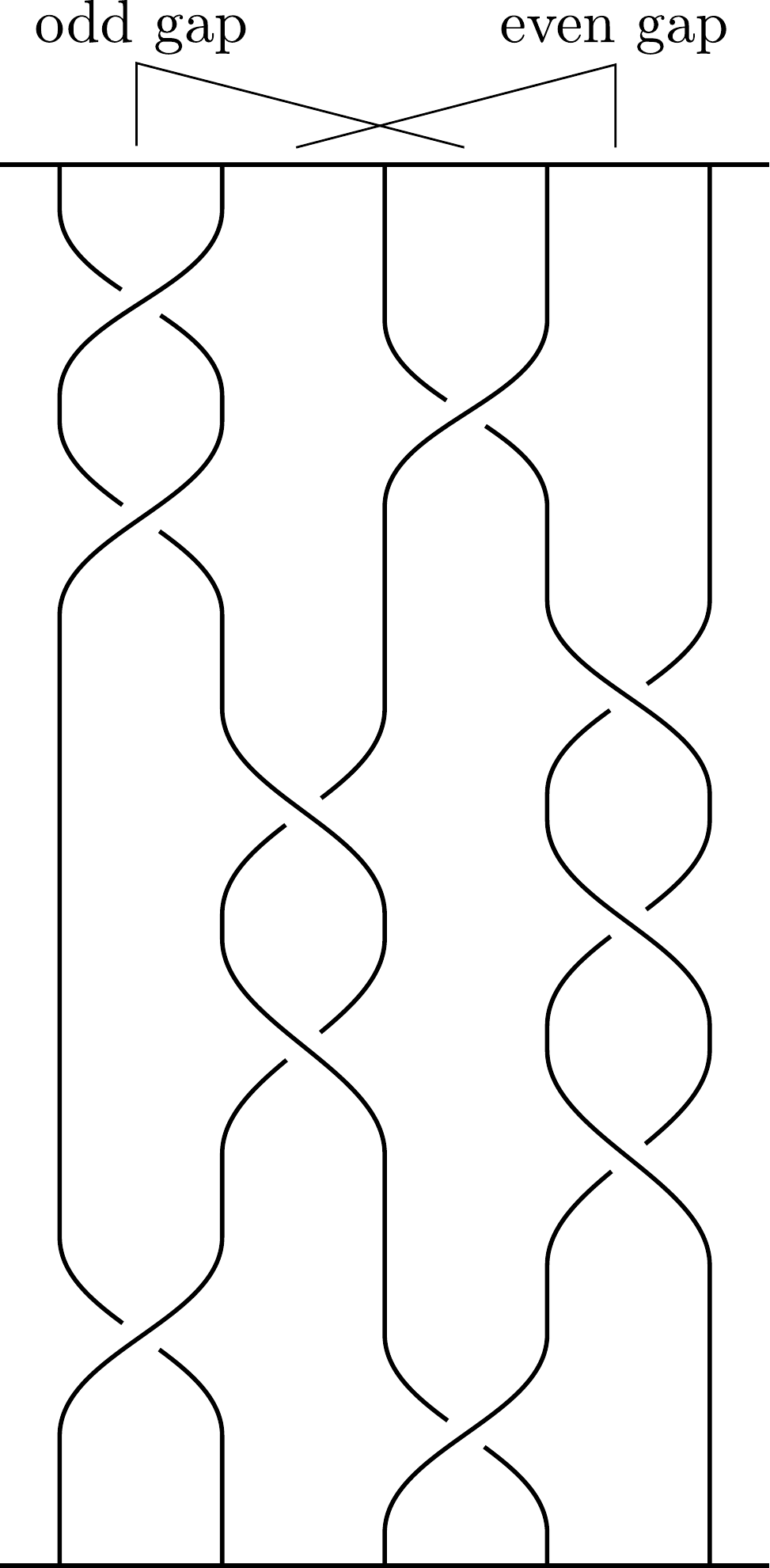}
\caption{A non-splittable reduced positive-leading alternating braid}
\label{fig:bd}
\end{figure}

Let $\D$ be a braid diagram on $n>1$ strands. Consider $\D$ as a word of generators of the braid group $B_n$. Note that each crossing in the braid diagram corresponds to a letter $\sigma_i$ or $\sigma_i^{-1}$ in the word $\D$. In a standard drawing of $\D$ as a braid, a crossing corresponding to $\sigma_i$ or $\sigma_i^{-1}$ is drawn in the space between the vertical line $x=i$ and $x=i+1$ in the $xy$-plane. We call this space a {\em gap}. A gap is {\em odd} if $i$ is odd, and {\em even} if $i$ is even. Thus, a crossing corresponds to $\sigma_i$ or $\sigma_i^{-1}$ is in an {odd gap} ({even gap}) if $i$ is odd (even). If $\D$ is non-splittable, then each gap in the braid diagram contains at least one crossing. If $\D$ is also reduced, then each gap in the diagram contains at least two crossings. Suppose $\D$ is alternating, the overpass (underpass) of a crossing $c$ must be the underpass (overpass) at next crossing $c^\prime$. So if $c^\prime$ is in the same gap, $c^\prime$ must have the same sign as $c$ and if $c^\prime$ is in an adjacent gap, $c^\prime$ must have the opposite sign. Therefore, if $\D$ is alternating, all odd gaps contain crossings of the same sign and all even gaps contain crossings of the opposite sign. If the first gap contains positive crossings, we say the alternating braid diagram $\D$ is {\em positive-leading} and if the first gap contains negative crossings, we say the alternating braid diagram $\D$ is {\em negative-leading}.

\begin{lemma}\label{l4}
Let $\D$ be a non-splittable reduced alternating braid diagram on $n$ strands, $E$ be the highest degree of $a$ in $P(\D,z,a)$ and $e$ the lowest degree of $a$
in $P(\D,z,a)$. Then $E=n-1-w(\D)$ and $e=1-n-w(\D)$. It follows that $a$-span$=2(n-1)$. 
\end{lemma}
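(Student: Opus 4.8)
The plan is to deduce Lemma~\ref{l4} from Theorem~\ref{p3} by analyzing the two extreme powers of $a$. The Remark following Theorem~\ref{p3} already supplies the inequalities $E\le n-1-w(\D)$ and $e\ge 1-n-w(\D)$ (this is just the MFW bound), so it remains only to prove the reverse inequalities, i.e.\ that the monomials $a^{n-1-w(\D)}$ and $a^{1-n-w(\D)}$ really occur in $P(\D,z,a)$ with nonzero coefficients. Reading off the highest power of $a$ from \eqref{e2} and the lowest power of $a$ from \eqref{e1}, these two coefficients are the Laurent polynomials
\[
C_E(z)=\sum_{\V\in\F^{\uparrow}(\D)}(-1)^{t'(\V)}z^{\,t(\V)-\gamma(\V)+1},\qquad
C_e(z)=\sum_{\U\in\F^{\downarrow}(\D)}(-1)^{t'(\U)+\gamma(\U)-1}z^{\,t(\U)-\gamma(\U)+1}.
\]
Because the mirror image of a reduced alternating braid is again a reduced alternating braid, and because passing to the mirror image interchanges $\T^{\uparrow}$ and $\T^{\downarrow}$ while substituting $z\mapsto -z$ (Remark~\ref{tree_formula_remark}), it suffices to prove that one of $C_E,C_e$ is a nonzero polynomial. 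I would treat $C_E$, and I may assume $\D$ is positive-leading.

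Next I would pin down the exponents occurring in $C_E$. Along any path from the root $\D$ to a leaf $\V$, a flip changes neither $t$ nor $\gamma$, while each smoothing increases $t$ by one and deletes one transposition from the underlying permutation, hence either splits one cycle into two (raising $\gamma$ by one) or fuses two cycles into one (lowering $\gamma$ by one). Writing $m(\V)$ for the number of fusing smoothings on this path, one gets $t(\V)-\gamma(\V)+1=2\,m(\V)-\gamma(\D)+1$. Thus all exponents appearing in $C_E$ share the same parity, they are bounded below by $1-\gamma(\D)$, and the minimal exponent $1-\gamma(\D)$ is attained exactly by those leaves reached using only cycle-splitting smoothings. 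The all-ascending leaf (flip every crossing, smooth nothing) is one such leaf and contributes $+z^{\,1-\gamma(\D)}$, so $C_E$ is certainly nonzero provided the coefficient of $z^{\,1-\gamma(\D)}$ does not cancel.

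Proving that non-cancellation is the crux, and it is where reducedness, non-splittability and the alternating pattern must all be used. The plan is to show that every minimal-exponent leaf contributes with the \emph{same} sign $(-1)^{t'(\V)}$, so that the coefficient of $z^{\,1-\gamma(\D)}$ equals $\pm$ the number of such leaves and is therefore nonzero. Concretely, I expect to prove that the crossings smoothed in forming a minimal-exponent leaf are forced to lie in gaps of a single parity; by the alternating (positive-leading) sign pattern this fixes the sign of each smoothed crossing and makes $(-1)^{t'(\V)}$ constant across all such leaves, with non-splittability and reducedness guaranteeing that every gap is occupied (indeed by at least two crossings of a common sign) so that this parity bookkeeping is well defined and the return order behaves predictably. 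Once sign-coherence is established we obtain $C_E\ne 0$, hence $E=n-1-w(\D)$; the mirror argument yields $e=1-n-w(\D)$; and therefore $a$-span$=E-e=2(n-1)$. The delicate point, and the one I expect to require the most care, is precisely the combinatorial claim that the split-smoothings realizing the minimal exponent cannot mix crossings of opposite sign.
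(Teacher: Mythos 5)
Your setup is sound: extracting the coefficients $C_E$ and $C_e$ of the two extreme powers of $a$ from \eqref{e2} and \eqref{e1}, and the bookkeeping $t(\V)-\gamma(\V)+1=2m(\V)-\gamma(\D)+1$ (each Seifert smoothing changes $\gamma$ by $\pm1$, flips change nothing) are both correct, as is the observation that the all-flipped leaf contributes $+z^{1-\gamma(\D)}$. But the decisive step --- that the coefficient of $z^{1-\gamma(\D)}$ in $C_E$ does not vanish --- is only announced, not proved, and the mechanism you propose for it is doubtful. You hope to show that every minimal-exponent leaf smooths crossings lying in gaps of a single parity; even granting that, the quantity $(-1)^{t'(\V)}$ depends on \emph{how many} negative crossings are smoothed, which varies from leaf to leaf, so ``single parity of gaps'' does not by itself give sign-coherence unless that parity is the positive one (forcing $t'(\V)=0$ for every minimal leaf). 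Nothing in the proposal establishes this, and since the minimal exponent can be attained by many leaves simultaneously, cancellation is a real threat that your argument does not rule out. As written, the proof is incomplete at exactly the point where reducedness, non-splittability and the alternating pattern must enter.

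It is worth contrasting this with the paper's route, which avoids the cancellation problem entirely by arranging for a \emph{unique} extremal leaf. The paper reads the top $a$-coefficient off \eqref{e1} rather than \eqref{e2}: there only leaves with $\gamma(\U)=n$ contribute, which forces the return order of $\U$ to be $1\tl2\tl\cdots\tl n$ and hence forces each gap to contain an even number of crossings of $\U$. Combined with the test of Remark~\ref{R_cor} (the first crossing of $\D$ in each even gap must survive unchanged), this pins down a single leaf $\U^*$ achieving the \emph{maximal} $z$-degree in that coefficient, so its term cannot be cancelled. Your slicing via \eqref{e2} lets every leaf contribute to the top coefficient and therefore trades a uniqueness argument for a sign-coherence argument that remains unproved. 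A secondary issue: the mirror-image reduction shows that $C_E\ne0$ for $\D$ is equivalent to $C_e\ne0$ for the mirror $\D'$, which has the opposite leading sign; so you cannot simultaneously restrict to ``only $C_E$'' and ``only positive-leading'' --- you would need the $C_E$ statement for both leadings, or separate arguments for $C_E$ and $C_e$ on positive-leading diagrams as the paper does (via $\U^*$ in $\T^{\downarrow}(\D)$ and $\V^*$ in $\T^{\uparrow}(\D)$ respectively).
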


A simple application of the Morton-Frank-Williams inequality and Lemma~\ref{l4}, together with the note in the second paragraph of this section, immediately shows that if $\D$ is a reduced alternating braid on $n$ strands, then the braid index of $\D$ is exactly $n$. We now proceed to prove Lemma \ref{l4}. 

\begin{proof} We will consider the positive-leading alternating braid diagrams first. Bear in mind that for a positive-leading alternating braid diagram, all the odd gaps contain only positive crossings and all the even gaps contain only negative crossings.

Consider $P(\D,z,a)$ as a Laurent
polynomial of $a$ with coefficients in the ring of Laurent polynomials
in the variable $z$. By (\ref{e1}), the highest possible degree of $a$ is $n-1-w(\D)$ and the only leaf vertices $\U\in \F^{\downarrow}(\D)$ that can make contributions to the 
term of $P(\D,z,a)$ with this $a$ degree must satisfy the condition $\gamma(\U)=n$. Let us consider
the braid $\U^*$ obtained from $\D$ by first smoothing all crossings in
the odd gaps, then smoothing all crossings in even gaps except the first one and the last one and finally
flip the sign of the last crossings in the even gaps.

Claim 1. $\U^*\in \F^{\downarrow}(\D)$. This is obvious by the checking method in Remark \ref{R_cor}. 

Claim 2. Part of the $\U^*$ contribution to $P(\D, z, a)$ is a term of
the form $\pm z^{t(\U^*)-n+1} a^{n-1-w(\D)}$. 

Proof of Claim 2. By (\ref{e1}), the contribution of $\U^*$ to $P(\D, z, a)$ is
$$
a^{1-n-w({\D})}(-1)^{t'(\U^*)}z^{t(\U^*)}((a^2-1)z^{-1})^{\gamma(\U^*)-1}.
$$
We have $\gamma(\U^*)=n$ and the result follows.

Claim 3. For any $\U\in \F^{\downarrow}(\D)$, $\U\not= \U^*$, the contribution of $\U$ to 
 $P(\D, z, a)$ either has its  maximum degree in the variable $a$ less than $n-1-w(\D)$, or has its degree in $z$ less than $t(\U^*)-n+1$.

Proof of Claim 3. If $\gamma(\U)<n$ then there is nothing to prove. Assuming that $\gamma(\U)=n$, then the contribution of $\U$ to $P(\D,z,a)$ is $a^{1-n-w({\D})}(-1)^{t'(\U)}z^{t(\U)}((a^2-1)z^{-1})^{n-1}$ so the degree of $z$ is $t(\U)-n+1$. We need to show that $t(\U)-n+1<t(\U^*)-n+1$, that is, $t(\U)<t(\U^*)$. Since $\gamma(\U)=n$, the return order of $\U$ is $1\tl 2\tl\cdots\tl n$. It follows that each gap contains either no crossings or at least two crossings of $\U$ since each strand, say it has label $i$ with starting point $(i,1)$, has to go through a gap an even number of times in order for it to end at the point $(i,0)$. We claim that $\U$ contains the first crossing of $\D$ in each even gap in its original sign. If this is not the case, let $c$ be the first such crossing that has been changed (so it is either smoothed or flipped in $\U$). Assume that $c$ is in the $i$-th gap (with $i$ being even). Notice that any strand with label $j$ less than $i$ can only cross the $i$-th gap into gaps to the right of the $i$-th gap and must return to the point $(j,0)$ at the end because the return order of $\U$ is $1\tl 2\tl\cdots\tl n$ and the strands can only travel downward. It follows that the strand entering $c$ from its right side must have label greater than $i$. Now consider the strand of $\U$ that first enters $c$ as we travel through $\U$ naturally. This strand has to come from the left side of $c$ by the above observation. But then it fails the test given in Remark \ref{R_cor} since $c$ is descending in $\D$ and cannot be changed in $\U$. This gives us the needed contradiction.
It now follows that $\U$ has at least two crossings in each even gap. If $\U$ also contains some crossings in the odd gaps, or contains more than two crossings in some even gaps, then we already have $t(\U)<t(\U^*)$ and there is nothing left to prove. So the only case left is the case when $\U$ contains no crossings in the odd gaps and exactly two crossings in each even gap. 

Claim 4. If $\U\in \F^{\downarrow}(\D)$ and it contains no crossings in the odd gaps and exactly two crossings in each even gap, then $\U=\U^*$, that is, $\U^*$ is the only element in $\F^{\downarrow}(\D)$ with this property.

Proof of Claim 4. By the proof of Claim 3, if $\U\not=\U^*$, then there exists an even gap such that $\U$ contains the first crossing of $\D$ in this gap in its original sign, and exactly one other crossing $c$ of $\D$ in this gap which is not the last crossing in this gap. 
By Remark \ref{R_cor} again, the sign of $c$ in $\D$ has to be changed to make it descending in $\U$. But as we travel through $\U$ naturally past $c$ and encounter the first crossing of $\D$ in the same gap below $c$ (keep in mind that this crossing exists because $c$ is not the last crossing of $\D$ in this gap, and the other crossings of $\D$ in the adjacent odd gap have all been smoothed in $\U$). This crossing has been smoothed in $\U$ but we are now approaching it from its overpass, so it fails the check in Remark \ref{R_cor} and $\U\not\in \F^{\downarrow}(\D)$, contradicting to $\U\in  \F^{\downarrow}(\D)$.

The consequence of Claims 1 to 4 is that if we write $P(\D,z,a)$ as a Laurent polynomial of $a$ whose coefficients are Laurent polynomials of $z$, then it contains a nontrivial term of the form $g(z)a^{n-1-w(\D)}$ and all other terms have degrees less than $n-1-w(\D)$, that is, $E=n-1-w(\D)$. To obtain $e$, we will use $\V^*\in \T^{\uparrow}(\D)$ and (\ref{e2}), where $\V^*$ is obtained from $\D$ by keeping the first crossing and flipping the last crossing in each odd gap, and smoothing all other crossings. The details are left to the reader.

Finally, if $\D$ is a non-splittable reduced negative-leading alternating braid diagram, then its mirror image $\D^\p$ is a non-splittable reduced positive-leading alternating braid diagram and we have $w(\D)=-w(\D^\p)$. Let $E^\p$ and $e^\p$ be the highest and lowest degrees of $a$ in $P(\D^\p,z,a)$. Then by the first part of the lemma, we have $E^\p=n-1-w(\D^\p)$ and $e^\p=1-n-w(\D^\p)$. Therefore by Remark~\ref{tree_formula_remark} we have $E=-e^\p=-(1-n-w(\D^\p))=n-1-w(\D)$ and $e=-E^\p=-(n-1-w(\D^\p))=1-n-w(\D)$.   
\end{proof}
\begin{remark}\label{remark4.4}
{\em Another way to handle the case when $\D$ is a non-splittable
  reduced negative-leading alternating braid diagram is to take its
  connected sum with a positive Hopf link, thus creating a new first gap
with a single pair of positive crossings. The statement now follows the
positive leading case and from the fact that the braid index is additive
under taking the connected sum.}
\end{remark}

Finally, as another application of Theorem \ref{p3}, we provide a short proof to the following theorem. This result seems to be known \cite{Stallings}, although we failed to find a specific proof in the literature.

\begin{theorem}\label{T4.4}
Let $\D$ be the closure of a reduced alternating braid on $n$ strands, then the leading coefficient in the Alexander polynomial of $\D$ is $\pm 1$.
\end{theorem}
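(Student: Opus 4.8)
The plan is to read the Alexander polynomial off of formula~(\ref{e1}) by specializing the HOMFLY polynomial. Setting $a=1$ in the skein relation of Proposition~\ref{Ho} yields the Conway skein relation, so the Conway polynomial is the specialization $\nabla_\D(z)=P(\D,z,1)$, and the Alexander polynomial is recovered through $\Delta_\D(t)\doteq\nabla_\D(t^{1/2}-t^{-1/2})$. Under this substitution the top power $z^d$ produces the single highest $t$-power $t^{d/2}$ with the same coefficient, while every lower power of $z$ produces strictly lower powers of $t$; hence the leading coefficient of $\Delta_\D$ equals the coefficient of the top-degree term of $\nabla_\D(z)$, and it suffices to show that this coefficient is $\pm1$. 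As throughout Section~\ref{s4} I assume $\D$ is non-splittable, and since passing to the mirror image replaces $\nabla_\D(z)$ by $\nabla_\D(-z)$ (Remark~\ref{tree_formula_remark}), which preserves $\pm1$ leading coefficients, I may assume $\D$ is positive-leading.

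Setting $a=1$ in~(\ref{e1}) makes the prefactor $a^{1-n-w(\D)}=1$ and annihilates every summand with $\gamma(\U)>1$ through the factor $((a^2-1)z^{-1})^{\gamma(\U)-1}$, leaving
\[
\nabla_\D(z)=\sum_{\U\in\F^{\downarrow}(\D),\ \gamma(\U)=1}(-1)^{t'(\U)}z^{t(\U)}.
\]
So I must determine $d=\max\{t(\U):\U\in\F^{\downarrow}(\D),\ \gamma(\U)=1\}$ and the signed number of leaves attaining it. Writing $|\cdot|$ for the crossing number, $t(\U)=|\D|-|\U|$, so maximizing $t(\U)$ amounts to minimizing $|\U|$. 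For any single-component descending leaf Lemma~\ref{l3} gives $w(\U)=1-n$, whence $|\U|\ge|w(\U)|=n-1$ and therefore $d\le|\D|-n+1$; moreover $|\U|=n-1$ forces every crossing of $\U$ to be negative. Since the underlying word of such a $\U$ is a product of exactly $n-1$ adjacent transpositions whose product is the $n$-cycle describing the single component, and a product of $n-1$ adjacent transpositions is an $n$-cycle if and only if each of the gaps $1,\dots,n-1$ occurs exactly once, the leaves attaining $d=|\D|-n+1$ are precisely those retaining a single crossing in each gap and smoothing all remaining crossings. The theorem thus reduces to showing that there is exactly one such leaf in $\F^{\downarrow}(\D)$, for then the coefficient of $z^d$ is its single contribution $(-1)^{t'(\U)}=\pm1$.

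The main obstacle is precisely this existence-and-uniqueness statement, which I would establish by an analysis of the test in Remark~\ref{R_cor} entirely parallel to Claims~1--4 in the proof of Lemma~\ref{l4}. Travelling naturally through a candidate one-per-gap leaf and applying the Remark~\ref{R_cor} test to each crossing of $\D$ at its first encounter should force, gap by gap, a unique retained crossing with all others smoothed, exactly as that same test singled out the leaf $\U^*$ in the proof of Lemma~\ref{l4}. The delicate point is that smoothing reconnects strands and therefore changes which crossings are ascending or descending in the intermediate braids, so the forced choice must be tracked through these relabelings and checked to be consistent all the way around the single cycle; carrying this out carefully is where the real work lies. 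Establishing it yields simultaneously that an extremal leaf exists, so $d=|\D|-n+1$, and that it is unique, so the leading coefficient of $\nabla_\D$, and hence of $\Delta_\D$, is $\pm1$.

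Finally, the non-splittable reduced negative-leading case follows from the mirror reduction of the first paragraph (equivalently, via the connected-sum device of Remark~\ref{remark4.4}), which completes the argument.
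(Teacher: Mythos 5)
Your reduction is correct and follows the same route as the paper: specialize at $a=1$, $z=x^{1/2}-x^{-1/2}$ so that only the $\gamma(\U)=1$ leaves of $\F^{\downarrow}(\D)$ survive, and observe via Lemma~\ref{l3} that such a leaf has $w(\U)=1-n$, hence at least $n-1$ crossings, so the top $z$-degree is at most $c(\D)-n+1$ and is attained exactly by leaves retaining one crossing per gap. Your observation that a product of $n-1$ adjacent transpositions is an $n$-cycle iff every gap is used exactly once is a clean justification of the extremal characterization (the paper only asserts the corresponding inequality), and the mirror-image reduction to the positive-leading case is fine.

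However, there is a genuine gap: the entire content of the theorem now rests on showing that $\F^{\downarrow}(\D)$ contains \emph{at least one and at most one} leaf with exactly one crossing per gap, and you explicitly defer this (``carrying this out carefully is where the real work lies''). Without it you have neither established that the degree $c(\D)-n+1$ is actually attained nor that its coefficient is a single $\pm1$ rather than a sum of several signs that could cancel. Note that this step is not a routine repeat of Claims~1--4 of Lemma~\ref{l4}: there the extremal leaves had $\gamma(\U)=n$, so the return order was the identity and the trajectory through $\U$ was easy to control, whereas here $\gamma(\U)=1$ and the single component winds through all $n$ strands, so which crossing in each gap is forced (and whether it is kept or flipped) depends on the order in which the natural traversal reaches the gaps. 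The paper resolves existence by an explicit traversal-driven construction of a leaf $\U^\p$: in the first gap smooth every positive crossing except the last, which is flipped; this carries the traversal into the second gap, where the first negative crossing encountered must be kept; one then smooths until the last reachable positive crossing of the third gap, flips it, and iterates, finally smoothing all not-yet-visited negative crossings on the return trip. That construction (and the verification that it passes the test of Remark~\ref{R_cor}) is the substance missing from your argument; uniqueness must then also be argued, which the paper itself only sketches.
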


\begin{proof}
Let $\Delta_\D(x)$ be the Alexander polynomial of $\D$, then $\Delta_\D(x)=P(\D,x^{1/2}-x^{-1/2},1)$ \cite{Doll}. Assume that $\D$ is positive leading, substituting $a=1$, $z=x^{1/2}-x^{-1/2}$ in (\ref{e1}) leads to
$$
\Delta_\D(x)=  \sum_{\U\in \F^{\downarrow}({\D}), \gamma(\U)=1} (-1)^{t'(\U)}(x^{1/2}-x^{-1/2})^{t(\U)}.
$$
Construct $\U^\p\in \F^{\downarrow}({\D})$ by the following procedure: (i) for the first gap of $\D$, smooth all crossings (which are positive) except the last one, which we will flip and cross it into the second gap. Once there, we have no choice but to keep the first (negative) crossing we encounter which will lead us into the third gap. (ii) Now we will smooth all crossings we encounter (negative and positive ones in the second and the third gaps) until we encounter the last positive crossing in the third gap. Notice that in doing so we may have reached the bottom of the braid at $\{3\}\times \{1\}$ and returned to the top of the braid at $\{3\}\times \{0\}$, so this last positive crossing we encounter may not be the last positive crossing in the third gap. At this positive crossing we flip it and cross it into the next gap and repeat (ii). This process is repeated until we reach the last gap and we can and will smooth all crossings in the last gap except one, no matter it is even or odd. At this point, we have exactly one positive crossing in each odd gap but may have negative crossings in the even gaps that we have not visited. However in the last gap we only have one crossing left and we will be traveling back through this crossing. As one can easily verify, each time we travel back to an even gap it is through the only crossing left in the odd gap to its right, so if there are any negative crossings that we have not visited before, these crossings will be between the two positive crossings in the two odd gaps to the left and right of the said even gap. By the descending rule, we can and will smooth all these crossings. Thus we have shown that 
$\U^\p\in \F^{\downarrow}({\D})$, $\gamma(\U^\p)=1$ and $t(\U^\p)=c(\D)-n+1$ where $c(\D)$ is the total number of crossings in $\D$. Furthermore, it is also rather easy to see that $t(\U)>c(\D)-n+1$ implies $\gamma(\U)>1$. Finally we leave it to our reader to verify that $\U^\p$ is the only element in $\F^{\downarrow}({\D})$ with the property $t(\U)=c(\D)-n+1$. The result of the theorem then follows. If $\D$ is negative leading, we will simply apply the above argument to (\ref{e2}) instead.
\end{proof}

\section{Admissible circuit partitions}\label{acp}

In this section we show that the HOMFLY polynomial formation (\ref{e1}) given in Theorem~\ref{p3}
is equivalent to the expansion derived by F.\ Jaeger for   braids in \cite{Ja}. Thus our approach used to prove  Proposition~\ref{p3}
provides an alternative (and in fact shorter) proof of his expansion. Unlike our approach (which is combinatorial in nature), 
Jaeger stated his formula using a concept called the {\em admissible circuit partitions} of a braid diagram. We establish 
this equivalence by showing that for any braid $\D$, there is a bijection between the leaf
vertices in the descending resolving tree $\T^{\downarrow}({\D})$ and the admissible circuit partitions of $\D$, such that
each leaf vertex and its corresponding admissible partition under this bijection make the same contributions to their respective 
HOMFLY polynomial formulas.

We begin with a brief review of the essential concepts introduced by
F.\ Jaeger with some small modification of the terminology to make the terms more consistent with our current paper.  
Interested readers please refer to \cite{Ja} for the details and his original terminologies. 

Given a braid diagram $\D$, a circuit partition $\pi$ of $\D$ is a braid diagram obtained 
from $\D$ by smoothing every crossing in a
subset $S$ of its crossings, while leaving the other crossings
unchanged. The smoothed crossings are denoted by small dotted circles in Figure 5. We can identify a circuit partition $\pi$ by the ordered pair 
$(\D,S)$ where $\D$ is the original braid diagram and
$S$ the set of crossings of $\D$ to be smoothed. Two circuit partitions $(\D,S)$ and $(\D',S')$ 
are defined to be equal if and only if $\D=\D'$ and $S=S'$. 

Let $c$ be a crossing in $\D$.
If $c$ is smoothed, the upper left part of the strand entering $c$ is connected to the 
lower left part of the strand exiting $c$ and the resulting curve is called a {\em left tangence} at $c$. 
Similarly one can define the {\em right tangence} at $c$. 

While traveling through $\pi$ naturally, we will meet each crossing in $\D$ exactly
twice (including the crossings that have been smoothed). A smoothed crossing $c$ in $\pi$ is said to be {\em admissible}
if it has the following property.  If $c$ is a positive crossing,
then the first passage at $c$ is a left tangence, if $c$ is negative
in $\D$, then the first passage at $c$ is a right tangence.
A circuit partition is {\em admissible} if all the smoothed crossings in $\pi$ are admissible.
In particular, $\D$ itself is an admissible circuit partition, in which no crossing is smoothed.
We denote the set of admissible circuit partitions of $\D$ by $\A(\D)$.  Then in \cite{Ja} Jaeger showed that
for any braid diagram $\D$ on $n$ strands, 
\begin{equation}
P({\D},z,a)=a^{1-n-w({\D})} \sum_{\pi\in \A(\D)} (-1)^{t^\p(\pi)}z^{t(\pi)}((a^2-1)z^{-1})^{\gamma(\pi)-1}
\label{J_equation}
\end{equation}
where $\pi=(\D,S)$, $t(\pi)$ is the number of crossings in $S$, $t^\p(\pi)$ is the number of negative crossings in $S$ and $\gamma(\pi)$ 
is the number of components in the closure of $\pi$.

\begin{proposition}
\label{thm:da}  
 For any braid diagram $\D$, there exists a bijection $h_\D:\ \F^{\downarrow}({\D})\to \A(\D)$ 
 such that for each $\U\in \F^{\downarrow}({\D})$, $\U$ and $\pi=h_\D(\U)$ are both obtained 
 from $\D$ by smoothing crossings from the same set.  
 \end{proposition}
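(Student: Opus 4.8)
The plan is to show that the desired bijection is forced: since flipping a crossing leaves all strand endpoints---and hence the permutation $p$, the return order, and the way a natural traversal reconnects---unchanged, while only smoothing alters them, every leaf $\U\in\F^{\downarrow}(\D)$ and the circuit partition sharing its set $S$ of smoothed crossings traverse exactly the same system of arcs. I would therefore define $h_\D(\U)=(\D,S)$, where $S$ is the set of crossings of $\D$ smoothed in producing $\U$, and observe at the outset that the order in which a natural traversal first reaches each crossing, and the side from which that first passage enters, depend on $S$ alone and agree for $\U$ and for $(\D,S)$.

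First I would check that $h_\D$ lands in $\A(\D)$. By Remark~\ref{R_cor}, a diagram with smoothed set $S$ lies in $\F^{\downarrow}(\D)$ exactly when, on a natural traversal, every smoothed crossing is first approached from its underpass and every surviving crossing from its overpass. It thus suffices to identify the smoothed-crossing condition with Jaeger's admissibility. Fixing the orientation convention of Figure~\ref{fig:cross}, I would treat the two signs separately: at a crossing the orientation-preserving smoothing produces a left arc (upper-left to lower-left) and a right arc (upper-right to lower-right), and a direct inspection shows that the arc carrying a first passage that enters along the underpass is the left arc when the crossing is positive and the right arc when it is negative. This is precisely the admissibility condition (left tangence first at a positive crossing, right tangence first at a negative one), so each smoothed crossing of $\U$ is admissible and $h_\D(\U)\in\A(\D)$.

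Next I would establish bijectivity by writing down the inverse explicitly. Given an admissible $\pi=(\D,S)$, let $\U$ be the diagram obtained by smoothing exactly the crossings of $S$ and, at each surviving crossing, assigning the over/under datum so that the first natural passage enters from the overpass---flipping the crossing of $\D$ precisely when the first passage would otherwise enter from the underpass. Because flips do not affect connectivity, the first-passage side at each crossing is already fixed by $S$, so this prescription makes a single well-defined choice at every surviving crossing; hence $\U$ is the unique diagram with smoothed set $S$ meeting the surviving-crossing requirement of Remark~\ref{R_cor}. Its surviving crossings pass that requirement by construction, and its smoothed crossings pass the underpass requirement exactly because $\pi$ is admissible, by the correspondence of the previous paragraph; thus $\U\in\F^{\downarrow}(\D)$ with $h_\D(\U)=\pi$, giving surjectivity, while the same uniqueness forces injectivity. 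Since $h_\D$ preserves $S$, the invariants $t$, $t^\p$ and $\gamma$ of corresponding objects agree, so the summands of~(\ref{e1}) and of~(\ref{J_equation}) match term by term.

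The step I expect to be the main obstacle is the sign-and-tangence inspection of the second paragraph: the whole bijection rests on correctly reading off, from Figure~\ref{fig:cross}, which of the two smoothing arcs is traversed by a first passage along the underpass, and a single transposed convention would send the underpass condition to the inadmissible tangence and collapse the argument. I would therefore carry out the positive and negative cases side by side and double-check the outcome against the writhe bookkeeping already used in Lemma~\ref{l3}.
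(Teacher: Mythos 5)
Your proposal is correct, but it takes a genuinely different route from the paper. The paper proves the bijection by induction on the number of vertices of $\T^{\downarrow}(\D)$: it locates the first ascending crossing $c$ in the natural traversal, splits $\F^{\downarrow}(\D)$ as the disjoint union $\F^{\downarrow}(\D_f)\cup\F^{\downarrow}(\D_s)$ and $\A(\D)$ as the corresponding union $\A(\D_f)\cup\A(\D_s)$ (after observing that no admissible partition smooths a crossing preceding $c$), and assembles $h_\D$ from the inductively given bijections. You instead argue directly: you observe that the natural traversal, and hence the side from which each crossing is first reached, depends only on the smoothed set $S$ and not on the over/under data; you then translate the membership test of Remark~\ref{R_cor} into (i) a condition on $S$ alone, which a local sign-versus-tangence check identifies with Jaeger's admissibility, and (ii) a condition on the surviving crossings that is uniquely satisfiable for each $S$, which hands you an explicit inverse and hence both injectivity and surjectivity at once. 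Your version is shorter and yields as a by-product an explicit description of $\F^{\downarrow}(\D)$ as the set of diagrams with an admissible smoothed set and all surviving crossings made descending; the paper's induction buys self-containedness, since it never has to carry out the positive/negative tangence case analysis and does not rest on Remark~\ref{R_cor}, which the paper asserts without proof. Two points deserve care in your write-up: first, make explicit that Remark~\ref{R_cor} is being used as an if-and-only-if characterization over all diagrams obtained from $\D$ by flipping and smoothing (this is how the paper states it, and it is also implicitly an inductive fact of the same nature as the paper's argument); second, the tangence check you flag as the main obstacle does come out as you predict under the orientation convention of Figure~\ref{fig:cross} --- for a downward-oriented positive crossing the understrand enters at the upper left, so a first passage along the underpass is a left tangence, matching Jaeger's condition --- but the whole proof does hinge on that single verification, so it should be displayed rather than merely promised.
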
  

\begin{proof}
Define a mapping $h_\D$ from $\F^{\downarrow}({\D})$ to the set of all circuit partitions of $\D$ as follows.
For any $\U\in  \F^{\downarrow}({\D})$, let $h_\D(\U)$ be 
the circuit partition $(\D,S)$ where $S$ is the set of crossings in $\D$ that are smoothed in 
the process of obtaining $\U$. We claim that $h_\D$ is a bijection between $\F^{\downarrow}({\D})$ and $\A(\D)$.

We proceed by induction on $k$, the number of vertices in the
descending tree $\T^{\downarrow}({\D})$. If $k=1$, then the tree consists of only the root vertex,
that is, $\D$ is descending. So $h_\D({D})=(\D,\emptyset)=\D\in \A(\D)$.
Assume that there is an admissible circuit partition $\pi\not=\D$, and let
$c$ be the first crossing encountered as we travel through $\pi$ naturally. Up to the crossing $c$,
 traveling through $\pi$ naturally is the same as  traveling through $\D$ naturally
since nothing has been changed up to that point. Since $c$ is descending, the first strand
entering $c$ is the top strand at $c$,  and smoothing $c$
results in a right tangence if $c$ is positive and in a left tangence if
$c$ is negative. This contradicts the definition of an admissible circuit partition. Thus the only
admissible circuit partition of $\D$ is itself. So $h_\D$ is a bijection.

Assume that the statement is true for all $\D$ whose descending tree has at most $k(\ge 1)$ vertices
 and let $\D$ be such that $\T^{\downarrow}({\D})$ has at most $k+1$ vertices.
Since $k+1\ge 2$, $\D$ contains at least one ascending crossing. Let
$c$ be the first ascending crossing of $\D$ encountered when we travel through $\D$ naturally. All crossings preceding $c$ in the
return order being descending, they are not switched or smoothed in any
vertex (braid) of $\T^{\downarrow}({\D})$. Let $\D_f$ and $\D_s$ be the children 
of the root vertex of $\T^{\downarrow}({\D})$, which are obtained by flipping 
and smoothing $c$ respectively. The descendants
of $\D_f$ and $\D_s$ respectively form the rooted trees 
$\T^{\downarrow}(\D_f)$ and $\T^{\downarrow}(\D_s)$ respectively. The set of leaves
$\F^{\downarrow}({\D})$ is the union of the sets $\F^{\downarrow}(\D_f)$ and
$\F^{\downarrow}(\D_s)$. Note that this union is disjoint, since $c$ is 
not smoothed in elements of $\F^{\downarrow}(\D_f)$ but is smoothed
in elements of $\F^{\downarrow}(\D_s)$. Using an argument similar to the one used in
the case $k=1$, we see that for every admissible circuit partition
$\pi=(\D,S)\in A(\D)$, $S$ does not contain any crossing preceding $c$ when we travel along the strands of $\D$ naturally Thus each $(\D,S)\in \A(\D)$ can be identified 
with $(\D_f,S)\in \A(\D_f)$ if $c\not\in S$, and with $(\D_s,S\setminus \{c\})\in \A(\D_s)$ if $c\in S$. 
Let $\jmath: \A(\D_f)\cup \A(\D_s)\to \A(\D)$ be the inverse of this identifying map (which is a bijection, of course).
Since $\D_f$ and $\D_s$ are the children 
of the root vertex of $\T^{\downarrow}({\D})$, $\T^{\downarrow}({\D_f})$ and $\T^{\downarrow}({\D_s})$
both have at least one less vertex than $\T^{\downarrow}({\D})$ does. By the induction hypothesis, $h_{\D_f}$ and $h_{\D_s}$
are bijections that map $\F^{\downarrow}({\D_f})$ to $\A(\D_f)$ and $\F^{\downarrow}({\D_s})$ to $\A(\D_s)$ respectively.
It follows that the mapping $h_{D_f\cup \D_s}: \F^{\downarrow}({\D_f})\cup \F^{\downarrow}({\D_s})\to \A(\D_f)\cup \A(\D_s)$ defined 
by $h_{\D_f\cup \D_s}(\pi)=h_{D_f}(\pi)$ if $\pi\in \F^{\downarrow}({\D_f})$ and $h_{\D_f\cup \D_s}(\pi)=h_{D_s}(\pi)$ if $\pi\in \F^{\downarrow}({\D_s})$
is a bijection. Since $h_\D=\jmath\circ h_{D_f\cup \D_s}$, $h_\D$ is a bijection between $\F^{\downarrow}({\D})$ and $ \A(\D)$.
This concludes our proof.
\end{proof}

\begin{example}
{\em
For the descending tree represented in Figure~\ref{fig:cpd}, the  
admissible circuit partitions corresponding to the leaf vertices are
depicted on the left hand side of the picture. For each leaf the
corresponding admissible circuit partition is at the same level. Note
that for each corresponding pair the same crossings of the braid at the root
are smoothed.} 
\end{example} 
 
The bijection introduced in Proposition~\ref{thm:da} has the property that 
$\U\in \F^{\downarrow}({\D})$ and
$h_{\D}(\U)$ are identical except the signs at some crossings. Hence we have
$
\gamma(\U)=\gamma(h_{\D}(\U))$,  $t(\U)=t(h_{\D}(\U))$ and 
$t^\p(\U)=t^\p(h_{\D}(\U))$. This leads to the following result.

\begin{corollary}
F.\ Jaeger's expansion of the HOMFLY polynomial (\ref{J_equation})~\cite[Proposition 3]{Ja} is an immediate consequence of 
 (\ref{e1}).
\end{corollary}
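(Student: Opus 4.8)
The plan is to recognize that formulas (\ref{e1}) and (\ref{J_equation}) are structurally identical sums, differing only in their index sets, and to use the bijection $h_{\D}$ established in Proposition~\ref{thm:da} to identify them term by term. First I would invoke Proposition~\ref{thm:da}, which provides a bijection $h_{\D}:\ \F^{\downarrow}({\D})\to \A(\D)$ with the property that each leaf vertex $\U$ and its image $\pi=h_{\D}(\U)$ are obtained from $\D$ by smoothing exactly the same set of crossings.

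Next I would record that this ``same set of smoothed crossings'' property forces the three statistics appearing in the summands to agree: since $\U$ and $h_{\D}(\U)$ differ only by the signs at certain crossings and not by which crossings are smoothed, we have $\gamma(\U)=\gamma(h_{\D}(\U))$, $t(\U)=t(h_{\D}(\U))$, and $t^\p(\U)=t^\p(h_{\D}(\U))$. Consequently the summand $(-1)^{t^\p(\U)}z^{t(\U)}((a^2-1)z^{-1})^{\gamma(\U)-1}$ indexed by $\U$ equals the summand $(-1)^{t^\p(\pi)}z^{t(\pi)}((a^2-1)z^{-1})^{\gamma(\pi)-1}$ indexed by $\pi=h_{\D}(\U)$.

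Finally I would reindex the sum on the right-hand side of (\ref{e1}) along $h_{\D}$. Because $h_{\D}$ is a bijection and the prefactor $a^{1-n-w({\D})}$ is common to both formulas, this reindexing turns the right-hand side of (\ref{e1}) verbatim into the right-hand side of (\ref{J_equation}), whence the two expressions for $P({\D},z,a)$ coincide and Jaeger's expansion follows. The only genuine content of the argument resides in Proposition~\ref{thm:da} and in the preservation of the statistics $\gamma$, $t$, and $t^\p$, both of which have already been established; once these are in hand the corollary is a purely formal reindexing, so no real obstacle remains beyond carefully matching the summands.
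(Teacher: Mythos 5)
Your proposal is correct and follows exactly the paper's argument: the paper likewise invokes the bijection $h_{\D}$ from Proposition~\ref{thm:da}, notes that $\gamma$, $t$, and $t^\p$ are preserved because $\U$ and $h_{\D}(\U)$ differ only in the signs at some crossings, and concludes by reindexing the sum in (\ref{e1}) to obtain (\ref{J_equation}). No differences worth noting.
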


As a final note to this section, we point out that if we change the definition of admissible circuit partition by defining a smoothed crossing $c$ in $\pi$ to be { admissible}
if  the first passage at $c$ is a left tangence when $c$ is negative, and the first passage at $c$ is a right tangence if $c$ is positive, then we can use (\ref{e2}) to show that F.\ Jaeger's expansion of the HOMFLY polynomial (\ref{J_equation}) becomes 
\begin{equation}
P({\D},z,a)=a^{n-1-w({\D})} \sum_{\pi\in \A^*(\D)} (-1)^{t^\p(\pi)}z^{t(\pi)}((1-a^{-2})z^{-1})^{\gamma(\pi)-1}
\label{J_equation2}
\end{equation}
where $A^*(\D)$ is the set of admissible circuit partitions under this new definition.

\section{Ending remarks}

We end this paper by noting the potential application of Theorem \ref{p3} to other classes of links that are presented in a closed braid form. It is also an interesting yet challenging question to explore whether there exist other classes of links that allow formulations similar to the ones in Theorem \ref{p3}. These will be the future research directions of the authors.

\section*{Acknowledgement}
The research of the third author was partially supported by a grant from
the Simons Foundation (\#245153 to G\'abor Hetyei). The authors thank
Alex Stoimenow for his insightful comments. They are also indebted to an
anonymous referee for the careful reading of this manuscript and
for suggesting several substantial improvements. The alternative argument given in Remark \ref{remark4.4} is also due to the referee.

\end{document}